\documentclass[10pt]{amsart}
\usepackage{amssymb,amsthm,amsmath,amsfonts}
\usepackage{hyperref}
\textwidth=16cm
\oddsidemargin=0pt
\evensidemargin=0pt

\def\R{{\mathbb R}}
\def\Rr{{\mathcal R}}
\def\C{{\mathbb C}}

\def\Z{{\mathbb Z}}

\def\Tr{\;\mathrm{Tr}\,}
\def\<{\langle}
\def\>{\rangle}
\def\P{{\mathbb P}}

\def\E{{\mathbb E}}
\def\U{{\mathbb U}}
\def\V{{\mathbb V}}
\def\S{{\mathbb S}}

\def\I{\mathbb I}

\def\X{\mathbb X}

\def\Y{\mathbb Y}

\newcommand{\be}{\begin{equation}}
\newcommand{\ee}{\end{equation}}
      \newtheorem{theorem}{Theorem}[section]
       \newtheorem{proposition}[theorem]{Proposition}
       \newtheorem{corollary}[theorem]{Corollary}
       \newtheorem{lemma}[theorem]{Lemma}
       

\title{Dual Lukacs regressions for non-commutative variables}
\author[K. Szpojankowski]{Kamil Szpojankowski}
\address[K. Szpojankowski]{Wydzia\l{} Matematyki i Nauk Informacyjnych\\
Politechnika Warszawska\\
Pl. Politechniki 1\\
00-661 Warszawa, Poland}
\email{k.szpojankowski@mini.pw.edu.pl}

\author[J. Weso\l owski]{Jacek Weso\l owski}
\address[J. Weso\l owski]{Wydzia\l{} Matematyki i Nauk Informacyjnych\\
Politechnika Warszawska\\
Pl. Politechniki 1\\
00-661 Warszawa, Poland}
\email{j.wesolowski@mini.pw.edu.pl}

\subjclass[2010]{Primary: 46L54. Secondary: 62E10.}

\keywords{Lukacs characterization,
conditional moments, freeness, free-Poisson distribution, free-Binomial distribution}

\begin{document}
\begin{abstract}
Dual Lukacs type characterizations of random variables in free probability are studied here. First, we develop a freeness property satisfied by Lukacs type transformations of free-Poisson and free-Binomial non-commutative variables which are free. Second, we give a characterization of non-commutative free-Poisson and free-Binomial variables by properties of first two conditional moments, which mimic Lukacs type assumptions known from  classical probability. More precisely, our result is a non-commutative version of the following result known in classical probability:
if $U$, $V$ are independent real random variables, such that $\E\,(V(1-U)|UV)$ and $\E\,(V^2(1-U)^2|UV)$ are non-random then $V$ has a gamma distribution and $U$ has a beta distribution.
\end{abstract}
\maketitle
\section{Introduction} Characterizations of non-commutative variables and their distributions is a field which develops through non-commutative probability with results which parallel their classical counterparts. It is not completely well understood why the results mirror so much these from the classical setting since the nature of objects under study seems to be much different.

An example of such a result is the Bernstein chracterization of the normal law of independent random variables $X$ and $Y$ by independence of $X+Y$ and $X-Y$ in classical probability \cite{Bernstein} (see also \cite{KaganLinnikRao}), and a characterization of non-commutative semicircular variables $\X$ and $\Y$ which are free and such that $\X+\Y$ and $\X-\Y$ are free by Nica in \cite{NicaChar}.
Similarly, the classical characterization of the normal law by independence of the mean $\bar{X}=\frac{1}{n}\sum_{i=1}^n\,X_i$, and empirical variance $S^2=\frac{1}{n-1}\sum_{i=1}^n\,(X_i-\bar{X})^2$,  where $(X_i)_{i=1,\ldots,n}$ are independent, identically distributed real random variables from \cite{Kawata} is paralleled by a non-comutative characterization of the Wigner law exploiting freeness of $\bar{\X}=\frac{1}{n}\,\sum_{i=1}^n\,\X_i$ and $\S^2=\frac{1}{n-1}\,\sum_{i=1}^n\,(\X_i-\bar{\X})^2$ built on free indentically distributed non-comutative random variables $(\X_i)_{i=1,\ldots,n}$ - see \cite{YoshidaChar}.

In this paper we are concerned with the celebrated Lukacs characterization of the gamma distribution, \cite{Lukacs}. It says that if $X$ and $Y$ are positive, non-degenerate and independent random variables and such that
\be\label{uav}
U=\frac{X}{X+Y}\qquad \mbox{and}\qquad V=X+Y
\ee
are independent then $X$ and $Y$ have gamma distributions, $G(p,a)$ and $G(q,a)$. Here by the gamma distribution $G(r,c)$, $r,c>0$, we understand the probability distribution with density
$$
f(x)=\frac{c^r}{\Gamma(r)}\,x^{r-1}\,e^{-cx}\,I_{(0,\infty)}(x).
$$

The direct result: If $X\sim G(p,a)$ and $Y\sim G(q,a)$ are independent then $U$ and $V$, defined through \eqref{uav}, are independent; is rather simple. It suffices just to compute the jacobian of the bijective transformation $(0,\infty)^2\ni (x,y)\mapsto\left(\tfrac{x}{x+y},x+y\right)\in(0,1)\times(0,\infty)$ and to follow how the densities transform. Immediately it follows also that $V\sim G(p+q,a)$ and $U$ is a beta random variable $B_I(p,q)$, which has the density
$$
f(x)=\frac{\Gamma(p+q)}{\Gamma(p)\,\Gamma(q)}\,x^{p-1}(1-x)^{q-1}\,I_{(0,1)}(x).
$$
The same computation while read backward proves the opposite implication: if $U$ and $V$ are independent, $U\sim G(p+q,a)$ and $V\sim B_I(p,q)$ then
$X=UV$ and $Y=(1-U)V$ are independent, $X\sim G(p,a)$ and $Y\sim G(q,a)$.

For random matrices the role of the gamma law is taken over by Wishart distribution defined, e.g. on the cone $\mathcal{V}_+$ of non-negative definite real $n\times n$ symmetric matrices by the Laplace transform $L({\bf s})=\left(\frac{\det {\bf a}}{\det({\bf a}+{\bf s})}\right)^p$ for positive definite ${\bf a}$ and $p\in\{0,\,\frac{1}{2},\frac{2}{2},\ldots,\frac{n-1}{2}\}\cup\left(\frac{n-1}{2},\,\infty\right)$, and for ${\bf s}$ such that ${\bf a}+{\bf s}$ is positive definite. If $p>\frac{n-1}{2}$ then Wishart distribution has density with respect to the Lebesgue measure on  $\mathcal{V}_+$ of the form
$$
f({\bf x})\propto (\det\,{\bf x})^{p-\frac{n+1}{2}}\,e^{-\Tr\,{\bf a}\,{\bf x}}I_{\mathcal{V}_+}({\bf x}).
$$
Matrix variate beta distribution, in the case of real $n\times n$ matrices, is a probability distribution on the set $\mathcal{D}=\{{\bf x}\in\mathcal{V}_+:\,{\bf I}-{\bf x}\in\mathcal{V}_+\}$ defined by the density
$$
g({\bf x})\propto (\det\,{\bf x})^{p-1}\,(\det\,({\bf I}-{\bf x}))^{q-1},
$$
where the parameters $p,q\in(\frac{n-1}{2},\,\infty)$.

Analogues of Lukacs characterizations have been studied since Olkin and Rubin \cite{OlkinRubin62,OlkinRubin64}. In particular, Casalis and Letac \cite{CasalisLetac96} obtained such a characterization in a general setting of probability measures on symmetric cones (including positive definite real symmetric and hermitian matrices), though they assumed an additional structural invariance property.  Under smoothness conditions on densities Bobecka and Weso\l owski \cite{BobWes2002} proved that if ${\bf X}$ and ${\bf Y}$ are independent $\mathcal{V}_+$-valued random matrices and ${\bf U}=({\bf X}+{\bf Y})^{-\frac{1}{2}}\,{\bf X}\,({\bf X}+{\bf Y})^{-\frac{1}{2}}$ and ${\bf V}={\bf X}+{\bf Y}$ are independent then ${\bf X}$ and ${\bf Y}$ are Wishart matrices. For recent extensions see \cite{Boutoria2009,BoutHassMass,Kolodziejek}.

In the context of Lukacs type characterizations of distributions of random variables in non-commutative setting the analogue of gamma distribution is free-Poisson (Marchenko-Pastur) distribution (Note that this analogy follows neither Berkovici-Pata bijection \cite{BerkoviciPata}, nor analogy between classical and free Meixner distributions defined in \cite{anshelevich} and developed in \cite{BoBr2006} i.e. it is not free gammma distribution). Let $\X$ and $\Y$ be free non-commutative variables having free-Poisson distributions (with properly defined parameters). Define
$$\U=(\X+\Y)^{-1/2}\,\X(\X+\Y)^{-1/2}\qquad\mbox{and}\qquad \V=\X+\Y.$$
One would suspect that by the analogy to the classical case or to the matrix variate situation, $\U$ and $\V$ are free. This is still an open problem. The closest result has been derived in \cite{CaCa}(referred to by CC in the sequel). They proved that for complex Wishart independent matrices ${\bf X}$ and ${\bf Y}$ the matrices ${\bf U}$ and ${\bf V}$ defined as for the real case above are asymptotically free and the limiting (in non-commutative sense) distributions of ${\bf U}$ and ${\bf V}$ were derived to be free-Poisson and a distribution which, by the analogy to classical (univariate or matrix-variate) cases could be called free-beta, but it has already been known under the name free-binomial (for details, see Sect. 7 in CC; consult also the first part of Section 3 below where a complete description of the set of parameters of this distribution is presented). The main result of Section 3 goes in the opposite direction. Assuming that free variables $\U$ and $\V$ have, respectively, free-binomial and free-Poisson distributions we prove that $\X$ and $\Y$ are free with suitable free-Poisson distribution each. This is done through developing some ideas from CC.

The direct non-commutative version of Lukacs characterization, saying that if $\X$ and $\Y$ are free and $\U$ and $\V$, as defined above, are free then $\X$ and $\Y$ are free-Poisson was obtained in \cite{BoBr2006}, Prop. 3.5.

The classical Lukacs characterization can be obtained with weaker assumptions than independence of $U$ and $V$. Such assumptions may be  formulated in the language of constancy of regressions. For instance, it is known that if $X$ and $Y$ are positive, non-degenerate and independent and
\begin{equation}\label{abab}
\E(X|X+Y)=c(X+Y)\qquad\mbox{and}\qquad \E(X^2|X+Y)=d(X+Y)^2
\end{equation}
for some real numbers $c$ and $d$ then $X$ and $Y$ are necessarily gamma distributed, $G(p,a)$ and $G(q,a)$, where the parameters $p$ and $q$ depend on $c$ and $d$. This can be traced back to Bolger and Harkness, \cite{BolHark}. But the result is also hidden as one of special cases in the celebrated Laha and Lukacs paper \cite{LahaLukacs}. Regression versions of Lukacs type characterizations of Wishart random matrices were obtained in \cite{LetacMassam} and more recently in the framework of regressions of quadratic forms in \cite{LetacWes2008,LetacWes2011}.

The non-commutative version can be found in \cite{BoBr2006}, which is devoted mostly to Laha-Lukacs type characterizations. The authors assumed that $\varphi(\X|\X+\Y)=\frac{1}{2}(\X+\Y)$ which is an analogue of first part of \eqref{abab} for identically distributed $X$ and $Y$, but instead of a direct non-commutative version of the second part of \eqref{abab} they considered, following the classical setting of \cite{LahaLukacs}, a more general condition
$$
\varphi(\X^2|\X+\Y)=a(\X+\Y)^2+b(\X+\Y)+c\I.
$$
They used a cumulant approach to derive possible distributions of $\X$ and $\Y$. A related result in converse direction for free-Poisson variables has been given recently in \cite{Ejsmont}

Our aim here is to consider the dual regression scheme of Lukacs type. In the classical setting it means that we take idependent $U$ which is $(0,1)$-valued and $V$ which is positive and assume that
$$
\E((1-U)V|UV)=c\qquad\mbox{and}\qquad\E((1-U)^2V^2|UV)=d
$$
for some constants $c$ and $d$. It was proved in \cite{BobWes2002Dual} that then necessarily $U$ and $V$ are, respectively, beta and gamma random variables (see also \cite{ChouHuang} for a more general characterization). A version of this characterization in the non-cummutative setting which is considered in Section 4, is the main result of this paper.

Next section is devoted to basics of non-commutative probability we need to explain the results and derivations.

\section{Preliminaries} Following \cite{VoiDykNica} or \cite{NicaSpeicherLect} we will recall basic notions of non-commutative probability which are necessary for this paper.

A non-commutative probability space is a pair $(\mathcal{A},\varphi)$, where $\mathcal{A}$ is a unital algebra over $\C\,$ and $\varphi:\mathcal{A}\to\C$ is a linear functional satisfying $\phi(\I)=1$. Any element $\X$ of $\mathcal{A}$ is called a (non-commutative) random variable.

Let $H$ be a Hilbert space. By $\mathcal{B}(H)$ denote the space of bounded linear operators on $H$. For $\mathcal{A}\subset\mathcal{B}(H)$ and $\varphi\in H$ we say that $(\mathcal{A},\,\varphi)$ is a $W^*$-probability space when $\mathcal{A}$ is a von Neumann algebra and $\varphi$ is a normalized, faithfull and tracial state, that is $||\varphi||=1$, $\varphi(\X^2)=\langle\X^2\varphi,\varphi\rangle=0$ iff $\X=0$ and $\varphi(\X\,\Y)=\varphi(\Y^*\,\X)$ for any $\X,\Y\in\mathcal{B}(H)$.

The $*$-distribution $\mu$ of a self-adjoint element $\X\in\mathcal{A}\subset\mathcal{B}(H)$ is a probabilistic measure on $\R$ such that $$\varphi(\X^r)=\int_{\R}\,t^r\,\mu(dt)\qquad \forall\,r=1,2,\ldots$$ In a setting of a general non-commutative probability space $(\mathcal{A},\varphi)$, we say that the distribution of the family $(\X_i)_{i=1,\ldots,q}$ is a linear functional $\mu_{\X_1,\ldots,\X_q}$ on the algebra $\C\,\langle x_1,\ldots,x_q\rangle$ of polynomials of non-commuting variables $x_1,\ldots,x_q$, defined by $$\mu_{\X_1,\ldots,\X_q}(P)=\varphi(P(\X_1,\ldots,\X_q))\qquad\forall\,P\in\C\,\langle x_1,\ldots,x_q\rangle.$$

Unital subalgebras $\mathcal{A}_i\subset \mathcal{A}$, $i=1,\ldots,n$, are said to be freely independent if $\varphi(\X_1,\ldots,\X_k)=0$ for $\X_j\in \mathcal{A}_{i(j)}$, where $i(j)\in\{1,\ldots,n\}$, such that $\varphi(\X_j)=0$, $j=1,\ldots,k$, if neighbouring elements are from different subalgebras, that is $i(1)\ne i(2)\ne \ldots \ne i(k-1)\ne i(k)$. Similarly, random variables $\X,\,\Y\in\mathcal{A}$ are free (freely independent) when subalgebras generated by $(\X,\,\I)$ and $(\Y,\,\I)$ are freely independent (here $\I$ denotes identity operator).

For free random variables $\X$ and $\Y$ having distributions $\mu$ and $\nu$, respectively, the distribution of $\X+\Y$, denoted by $\mu\boxplus\nu$, is called free convolution of $\mu$ and $\nu$.

For self-adjoint and free $\X$, $\Y$ with distributions $\mu$ and $\nu$, respectively, and $\X$ positive, that is the support of $\mu$ is a subset of $(0,\infty)$, free multiplicative convolution of $\mu$ and $\nu$ is defined as the distribution of $\sqrt{\X}\,\Y\sqrt{\X}$ and denoted by $\mu\boxtimes\nu$. Due to the tracial property of $\varphi$ the moments of $\Y\,\X$, $\X\,\Y$ and $\sqrt{\X}\,\Y\sqrt{\X}$ match.

Let $\chi=\{B_1,B_2,\ldots\}$ be a  partition of the set of numbers $\{1,\ldots,k\}$. A partition $\chi$ is a crossing partition if there exist distinct blocks $B_r,\,B_s\in\chi$ and numbers $i_1,i_2\in B_r$, $j_1,j_2\in B_s$ such that $i_1<j_1<i_2<j_2$. Otherwise $\chi$ is called a non-crossing partition. The set of all non-crossing partitions of $\{1,\ldots,k\}$ is denoted by $NC(k)$.

For any $k=1,2,\ldots$, (joint) cumulants of order $k$ of non-commutative random variables $\X_1,\ldots,\X_n$ are defined recursively as $k$-linear maps $\mathcal{R}_k:\C\,\langle x_i,\,i=1,\ldots,k\rangle\to\C$ through equations
$$
\varphi(\Y_1,\ldots,\Y_m)=\sum_{\chi\in NC(m)}\,\prod_{B\in\chi}\,\mathcal{R}_{|B|}(x_i,\,i\in B)
$$
holding for any $\Y_i\in\{\X_1,\ldots,\X_n\}$, $i=1,\ldots,m$, and any $m=1,2,\ldots$,
with $|B|$ denoting the size of the block $B$.

Freeness can be characterized in terms of behaviour of cumulants in the following way: Consider unital subalgebras $(\mathcal{A}_i)_{i\in I}$ of an algebra $\mathcal{A}$ in a non-commutative probability space $(\mathcal{A},\,\varphi)$. Subalgebras $(\mathcal{A}_i)_{i\in I}$ are freely independent iff for any $n=2,3,\ldots$ and for any $\X_j\in\mathcal{A}_{i(j)}$ with $i(j)\in I$, $j=1,\ldots,n$ any $n$-cumulant
$$
\mathcal{R}_n(\X_1,\ldots,\X_n)=0
$$
if there exists a pair $k,l\in\{1,\ldots,n\}$ such that $i(k)\ne i(l)$.

In sequel we will use the following formula from \cite{BozLeinSpeich} which connects cumulants and moments for non-commutative random variables
\be\label{BLS}
\varphi(\X_1\ldots\X_n)=\sum_{k=1}^n\,\sum_{1<i_2<\ldots<i_k\le n}\,\mathcal{R}_k(\X_1,\X_{i_2},\ldots,\X_{i_k})\,\prod_{j=1}^k\,\varphi(\X_{i_j+1}\ldots\X_{i_{j+1}-1})
\ee
with $i_1=1$ and $i_{k+1}=n+1$ (empty products are equal 1).

The classical notion of conditional expectation has its non-commutative counterpart in the case $(\mathcal{A},\varphi)$ is a  $W^*$-probability spaces, that is $\mathcal{A}$ is necessarily a von Neumann algebra. Namely, if $\mathcal{B}\subset \mathcal{A}$ is a von Neumann subalgebra of the von Nuemann algebra $\mathcal{A}$, then there exists a faithful normal projection from $\mathcal{A}$ onto $\mathcal{B}$, denoted by $\varphi(\cdot|\mathcal{B})$, such that $\varphi(\varphi(\cdot|\mathcal{B}))=\varphi$. This projection $\varphi(\cdot|\mathcal{B})$ is a non-commutative conditional expectation given subalgebra $\mathcal{B}$. If $\X\in \mathcal{A}$ is self-adjoint then $\varphi(\X|\mathcal{B})$ defines a unique self-adjoint element in $\mathcal{B}$. For $\X\in\mathcal{A}$ by $\varphi(\cdot|\X)$ we denote conditional expectation given von Neumann subalgebra $\mathcal{B}$ generated by $\X$ and $\I$. Non-commutative conditional expectation has many properties analogous to those of classical conditional expectation. For more details one can consult e.g. \cite{Takesaki}. Here we state two of them we need in the sequel. The proofs can be found in \cite{BoBr2006}.
\begin{lemma}\label{conexp} Consider a $W^*$-probability space $(\mathcal{A},\varphi)$.
\begin{itemize}
\item If $\X\in\mathcal{A}$ and $\Y\in\mathcal{B}$, where $\mathcal{B}$ is a von Neumann subalgebra of $\mathcal{A}$, then
\be\label{ce1}
\varphi(\X\,\Y)=\varphi(\varphi(\X|\mathcal{B})\,\Y).
\ee
\item If $\X,\,\Z\in\mathcal{A}$ are freely independent then
\be\label{ce2}
\varphi(\X|\Z)=\varphi(\X)\,\mathbb{I}.
\ee
\end{itemize}
\end{lemma}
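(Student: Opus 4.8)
The plan is to treat the two assertions separately, in both cases leaning on the defining properties of the trace-preserving conditional expectation $E:=\varphi(\cdot\,|\mathcal{B})$ recalled above, namely that $E$ is a normal projection of $\mathcal{A}$ onto $\mathcal{B}$ satisfying $\varphi\circ E=\varphi$. The one additional structural fact I would invoke is that such a norm-one projection onto a subalgebra is automatically a $\mathcal{B}$-bimodule map (Tomiyama's theorem), so that $E(b_1\,\X\,b_2)=b_1\,E(\X)\,b_2$ for all $b_1,b_2\in\mathcal{B}$ and $\X\in\mathcal{A}$; the existence of a trace-preserving $E$ of this kind is guaranteed precisely because $\varphi$ is tracial.

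For \eqref{ce1}, fix $\X\in\mathcal{A}$ and $\Y\in\mathcal{B}$. The module property with $b_1=\I$ and $b_2=\Y$ gives $E(\X\,\Y)=E(\X)\,\Y$. Applying the trace-preserving property $\varphi\circ E=\varphi$ to the element $\X\,\Y$ then yields $\varphi(\X\,\Y)=\varphi(E(\X\,\Y))=\varphi(E(\X)\,\Y)=\varphi(\varphi(\X|\mathcal{B})\,\Y)$, which is the claim.

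For \eqref{ce2} I would first record the uniqueness that \eqref{ce1} provides: since $\varphi$ is faithful, $E(\X)$ is the unique element of $\mathcal{B}$ for which $\varphi(E(\X)\,b)=\varphi(\X\,b)$ holds for every $b\in\mathcal{B}$ (if two elements of $\mathcal{B}$ had this property, their difference $m$ would satisfy $\varphi(m\,b)=0$ for all $b\in\mathcal{B}$, and taking $b=m^{*}$ forces $m=0$). Hence it suffices to check that the scalar $\varphi(\X)\,\I$ passes this test, i.e.\ that $\varphi(\X\,b)=\varphi(\X)\,\varphi(b)$ for all $b\in\mathcal{B}$, where $\mathcal{B}$ is the von Neumann algebra generated by $\Z$ and $\I$. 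I would prove this first for $b$ a polynomial in $\Z$ by the standard centering trick: writing $\X=(\X-\varphi(\X)\I)+\varphi(\X)\I$ and $b=(b-\varphi(b)\I)+\varphi(b)\I$ and expanding, the only term not manifestly of the desired form is $\varphi\big((\X-\varphi(\X)\I)(b-\varphi(b)\I)\big)$, and this vanishes because it is the $\varphi$-value of a product of two centered elements drawn from the free subalgebras generated by $\X$ and by $\Z$. This delivers the factorization on polynomials.

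The last step, and the one place where some care is needed, is the passage from polynomials in $\Z$ to an arbitrary $b\in\mathcal{B}$. Here I would use that the polynomials in $\Z$ and $\I$ are weak-$*$ dense in $\mathcal{B}$ together with the normality of $\varphi$, so that the identity $\varphi(\X\,b)=\varphi(\X)\,\varphi(b)$ extends by continuity to all of $\mathcal{B}$. Invoking the uniqueness from the previous paragraph then forces $\varphi(\X|\Z)=\varphi(\X)\,\I$. The analytic density/normality argument is the only genuinely nontrivial ingredient; the algebraic content sits entirely in the freeness-driven vanishing of that single centered term.
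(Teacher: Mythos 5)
Your proposal is correct, but note that the paper itself offers no proof of this lemma: it states it and refers the reader to the cited work of Bo\.zejko and Bryc for the proofs, so there is no internal argument to compare against. Your argument is essentially the standard one that the cited reference uses. For \eqref{ce1}, invoking the $\mathcal{B}$-bimodule property of the projection (Tomiyama) together with $\varphi\circ\varphi(\cdot|\mathcal{B})=\varphi$ is exactly right. For \eqref{ce2}, your structure is sound: faithfulness plus traciality gives uniqueness of the element of $\mathcal{B}$ representing the functional $b\mapsto\varphi(\X b)$; the centering trick plus the definition of freeness (a product of two centered elements from distinct free subalgebras has trace zero) gives the factorization $\varphi(\X b)=\varphi(\X)\varphi(b)$ for $b$ a polynomial in $\Z$ and $\I$; and separate $\sigma$-weak continuity of multiplication together with normality of $\varphi$ extends this to the von Neumann subalgebra generated by $\Z$ and $\I$.

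One small point worth making explicit: the weak-$*$ density of polynomials in $\Z$ and $\I$ in that von Neumann subalgebra requires $\Z$ to be self-adjoint (otherwise one needs polynomials in $\Z$ and $\Z^{*}$, and freeness of the algebra generated by $\Z$ alone would not immediately control those). In the paper all variables to which the lemma is applied are self-adjoint and positive, so this is harmless, but since the lemma is stated for arbitrary $\X,\Z\in\mathcal{A}$ you should either add the self-adjointness hypothesis or work with the $*$-algebra generated by $\Z$ throughout. This is a matter of bookkeeping, not a gap in the idea.
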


For any $n=1,2,\ldots$, let $(\X_1^{(n)},\ldots,\X_q^{(n)})$ be a family of random variables in a non-commutative probability space $(\mathcal{A}_n,\varphi_n)$. The sequence of distributions $(\mu_{(\X_i^{(n)},\,i=1,\ldots,q)})$ converges as $n\to\infty$ to a distribution $\mu$ if $\mu_{(\X_i^{(n)},\,i=1,\ldots,q)}(P)\to \mu(P)$ for any $P\in\C\,\langle x_1,\ldots,x_q\rangle$. If additionally $\mu$ is a distribution of a family $(\X_1,\ldots,\X_q)$ of random variables in a non-commutative space $(\mathcal{A},\varphi)$ then we say that $(\X_1^{(n)},\ldots,\X_q^{(n)})$ converges in distribution to $(\X_1,\ldots,\X_q)$. Moreover, if $\X_1,\,\ldots,\X_q$ are freely independent then we say that $\X_1^{(n)},\ldots,\X_q^{(n)}$ are asymptotically free.

Now we introduce basic analytical tools used to deal with non-commutative random variables and their distributions.

For a non-commutative random variable $\X$ its $r$-transform is defined as
\be\label{rtr}
r_{\X}(z)=\sum_{n=0}^{\infty}\,\mathcal{R}_{n+1}(\X)\,z^n.
\ee
In \cite{VoiculescuAdd} it is proved that $r$-transform of a random variable with compact support is analytic in a neighbourhood of zero.  From properties of cumulants it is immediate that for $\X$ and $\Y$ which are freely independent
\be\label{freeconv}
r_{\X+\Y}=r_{\X}+r_{\Y}.
\ee
This relation explicitly (in the sense of $r$-transform) defines free convolution of $\X$ and $\Y$.
If $\X$ has the distribution $\mu$, then often we will write $r_{\mu}$ instead $r_{\X}$.

Another analytical tool is an $S$-transform, which works nicely with products of freely independent variables. For a noncommutative random variable $\X$ its $S$-transform, denoted by $S_{\X}$, is defined through the equation
\be\label{Str}
R_{\X}(zS_{\X}(z))=z,
\ee
where $R_{\X}(z)=zr_{\X}(z)$. For $\X$ and $\Y$ which are freely independent
\be\label{Scon}
S_{\X\,\Y}=S_{\X}\,S_{\Y}.
\ee

Cauchy transform of a probability measure $\nu$ is defined as
$$
G_{\nu}(z)=\int_{\R}\,\frac{\nu(dx)}{z-x},\qquad \Im(z)>0.
$$
Cauchy transforms and $r$-transforms are related by
\be\label{Crr}
G_{\nu}\left(r_{\nu}(z)+\frac{1}{z}\right)=z.
\ee

Finally we introduce moment generating function $M_{\X}$ of a random variable $\X$ by
\be\label{mgf}
M_{\X}(z)=\sum_{n=1}^{\infty}\,\varphi(\X^n)\,z^n.
\ee
Moment generating function and $S$-transform of $\X$ are related through
\be\label{MSr}
M_{\X}\left(\frac{z}{1+z}\,S_{\X}(z)\right)=z.
\ee

\section{Free transformations of freely independent free-Poisson and free-binomial variables}
A non-commutative random variable $\X$ is said to be free-Poisson variable if it has Marchenko-Pastur(or free-Poisson) distribution $\nu=\nu(\lambda,\alpha)$ defined by the formula
\be\label{MPdist}
\nu=\max\{0,\,1-\lambda\}\,\delta_0+\lambda \tilde{\nu},
\ee
where $\lambda\ge 0$ and the measure $\tilde{\nu}$, supported on the interval $(\alpha(1-\sqrt{\lambda})^2,\,\alpha(1+\sqrt{\lambda})^2)$, $\alpha>0$ has the density (with respect to the Lebesgue measure)
$$
\tilde{\nu}(dx)=\frac{1}{2\pi\alpha x}\,\sqrt{4\lambda\alpha^2-(x-\alpha(1+\lambda))^2}\,dx.
$$
The parameters $\lambda$ and $\alpha$ are called the rate and the jump size, respectively.

Marchenko-Pastur distribution arises in a natural way  as an almost sure weak limit of empirical distributions of eigenvalues for random matrices of the form ${\bf X}\,{\bf X}^T$ where ${\bf X}$ is a matrix with zero mean iid entries with finite variance, in particular for Wishart matrices, (see \cite{MarchPastur}) and as a marginal distribution of a subclass of classical stochastic processes, called quadratic harnesses (see e.g. \cite{BrycWes2005}).

It is worth to note that a non-commutative variable with Marchenko-Pastur distribution arises as a limit in law (in non-commutative sense) of variables with distributions $((1-\frac{\lambda}{N})\delta_0+\frac{\lambda}{N}\delta_{\alpha})^{\boxplus N}$ as $N\to\infty$, see \cite{NicaSpeicherLect}. Therefore, such variables are often called free-Poisson.

It is easy to see that if $\X$ is free-Poisson, $\nu(\lambda,\alpha)$, then $\mathcal{R}_n(\X)=\alpha^n\lambda$, $n=1,2,\ldots$. Therefore its $r$-transform has the form
$$
r_{\nu(\lambda,\alpha)}(z)=\frac{\lambda\alpha}{1-\alpha z}.
$$

A non-commutative random variable $\Y$ is free-binomial if its distribution $\beta=\beta(\sigma,\theta)$ is defined by
\be\label{freebeta}
\beta=(1-\sigma)\mathbb{I}_{0<\sigma<1}\,\delta_0+\tilde{\beta}+(1-\theta)\mathbb{I}_{0<\theta<1}\delta_1,
\ee
where $\tilde{\beta}$ is supported on the interval $(x_-,\,x_+)$,
$$
x_{\pm}=\left(\sqrt{\frac{\sigma}{\sigma+\theta}\,\left(1-\frac{1}{\sigma+\theta}\right)}\,\pm\,\sqrt{\frac{1}{\sigma+\theta}\left(1-\frac{\sigma}{\sigma+\theta}\right)}\right)^2,
$$
has the density
$$
\tilde{\beta}(dx)=(\sigma+\theta)\,\frac{\sqrt{(x-x_-)\,(x_+-x)}}{2\pi x(1-x)}\,dx.
$$

This distributions appears in CC (unfortunately, the constant $\alpha+\beta$ is missing in the expression given for the density part in Cor. 7.2 in CC) as a limit distribution for beta matrices as well as spectral distribution for free Jacobi processes in \cite{Demni2008} and for a subclass of free quadratic harnesses (see \cite{FreeHarness} and \cite{BrycProcFreeMeix}).
The n-th free convolution power of distribution
$$
p\delta_0+(1-p)\delta_{1/n}
$$
is free-binomial distribution with parameters $\sigma=n(1-p)$ and $\theta=np$, which justifies the name of the distribution (see \cite{SaitohYosida}).

Its Cauchy transform is of the form (see e.g. the proof of Cor. 7.2 in CC)
\be\label{betaC}
G_{\sigma,\theta}(z)=\frac{(\sigma+\theta-2)z+1-\sigma-\sqrt{[(\sigma+\theta-2)z+1-\sigma]^2-4(1-\sigma-\theta)z(z-1)}}{2z(1-z)}.
\ee
So far the range of parameters $\sigma,\theta$ for which \eqref{freebeta} is a true probability distribution has not been completely described in the literature. In CC  the authors seem to assume $\theta, \sigma>0$, which apparently is not enough for a correct definition. On the other hand \cite{Demni2008} assumes $\sigma,\theta>1$ which is too restrictive in general. The complete set of parameters is described below.
\begin{proposition}
The formula \eqref{freebeta} defines correctly a probability (free-binomial) distribution if and only if
$(\sigma,\theta)\in G$ defined as
$$
G=\left\{(\sigma,\theta):\,\frac{\sigma+\theta}{\sigma+\theta-1}>0,\,\frac{\sigma\theta}{\sigma+\theta-1}>0\right\}.
$$
\end{proposition}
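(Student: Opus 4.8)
The plan is to determine precisely when the formula \eqref{freebeta} defines a genuine probability measure by analyzing the absolutely continuous part $\tilde\beta$ together with the atomic parts at $0$ and $1$, and checking that the total mass equals $1$ and that all masses are nonnegative. First I would examine the support endpoints $x_\pm$. Writing $a=\tfrac{\sigma}{\sigma+\theta}$ and $b=\tfrac{1}{\sigma+\theta}$, one has $x_\pm=(\sqrt{a(1-b)}\pm\sqrt{b(1-a)})^2$, so for $x_\pm$ to be real and for $\tilde\beta(dx)$ to be a nonnegative density on $(x_-,x_+)\subset(0,1)$ one needs the quantities under the square roots to be nonnegative and the prefactor $(\sigma+\theta)$ to be positive. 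I expect this to force exactly the sign conditions $\tfrac{\sigma+\theta}{\sigma+\theta-1}>0$ and $\tfrac{\sigma\theta}{\sigma+\theta-1}>0$; indeed a direct computation should give $x_-x_+$ and $x_-+x_+$ (or equivalently $x_+-x_-$) as rational expressions in $\sigma,\theta$ whose positivity/reality is governed precisely by these two combinations.

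Next I would verify the mass balance. The cleanest route is via the Cauchy transform \eqref{betaC}: the total mass of the absolutely continuous part can be recovered from the behavior of $G_{\sigma,\theta}$, and the atoms at $0$ and $1$ can be read off as residues, namely the atom at $0$ has mass $\lim_{z\to 0} z\,G_{\sigma,\theta}(z)$ and the atom at $1$ has mass $-\lim_{z\to 1}(z-1)\,G_{\sigma,\theta}(z)$. I would compute these two limits from \eqref{betaC}; I expect to obtain $(1-\sigma)$ and $(1-\theta)$ respectively (up to the indicator restrictions), matching the atoms posited in \eqref{freebeta}. Then $\int\tilde\beta=1-(1-\sigma)\mathbb{I}_{0<\sigma<1}-(1-\theta)\mathbb{I}_{0<\theta<1}$ should follow automatically from $G_{\sigma,\theta}(z)\sim 1/z$ as $z\to\infty$, so the normalization is consistent and does not impose extra constraints beyond nonnegativity of each piece.

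For the nonnegativity of the atoms, note that an atom may only be present when its mass lies in $[0,1]$; the indicator $\mathbb{I}_{0<\sigma<1}$ in \eqref{freebeta} already encodes that the atom at $0$ appears exactly when $0<\sigma<1$ (and similarly for $\theta$ at $1$), so I would confirm that outside these ranges the corresponding residue vanishes, i.e. the formula is self-consistent, and that inside these ranges the residues are genuinely in $(0,1)$. The key point is to show that after imposing reality and nonnegativity of the density part, the remaining conditions on the atoms are automatically subsumed by the two inequalities defining $G$, rather than adding new ones.

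The main obstacle I anticipate is the case analysis around the hyperbola $\sigma+\theta=1$, where the sign of $\sigma+\theta-1$ flips. Both defining inequalities involve this denominator, so one must treat $\sigma+\theta>1$ and $\sigma+\theta<1$ separately: in the first regime both numerators $\sigma+\theta$ and $\sigma\theta$ must be positive, while in the second both must be negative, and I would need to check carefully that the support $(x_-,x_+)$ stays inside $(0,1)$ and that the density remains nonnegative in each regime. Reconciling these sign patterns with the atom conditions $0<\sigma<1$, $0<\theta<1$ — and ruling out spurious parameter values where $x_\pm$ become complex or the density changes sign — is where the delicate bookkeeping lies. Once the two regimes are handled and shown to combine exactly into the set $G$, the converse direction (that every $(\sigma,\theta)\in G$ yields a valid distribution) follows by reversing these computations.
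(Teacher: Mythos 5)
There is a genuine gap, and it is structural rather than a matter of missing computation. Your sign bookkeeping rests on the assumption that the continuous part $\tilde\beta$ must be a nonnegative density on $(x_-,x_+)\subset(0,1)$, forcing the prefactor $\sigma+\theta$ to be positive. This is false on a whole component of $G$: the set $G$ contains parameters with $\sigma+\theta<0$ and $\sigma\theta<0$, for which \eqref{freebeta} is a perfectly good probability measure whose continuous part lives \emph{outside} $[0,1]$. Concretely, take $(\sigma,\theta)=(-2,1)$: then $\tfrac{\sigma+\theta}{\sigma+\theta-1}=\tfrac12>0$ and $\tfrac{\sigma\theta}{\sigma+\theta-1}=1>0$, so $(\sigma,\theta)\in G$, and one computes $x_-=1$, $x_+=9$, so $\tilde\beta$ is supported on $(1,9)$, where $x(1-x)<0$; the density is nonnegative there precisely \emph{because} the prefactor $\sigma+\theta=-1$ is also negative, and indeed $\int_1^9\tilde\beta(dx)=1$ (there are no atoms since neither indicator fires). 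The paper itself points this out in the remark following the proposition: for $\sigma+\theta<0$ the continuous part sits in $(-\infty,-1)$ or $(1,\infty)$. Your case analysis around $\sigma+\theta=1$, with "support stays inside $(0,1)$" and "$\sigma+\theta>0$" as requirements, would therefore reject these parameters and characterize a strictly smaller set than $G$ — the proof as planned proves the wrong statement. A second, lesser gap: you propose to get the atoms and the normalization "automatically" from \eqref{betaC}, but that formula is quoted from CC for the parameter range considered there; to invoke it for arbitrary $(\sigma,\theta)$ you must first prove that \eqref{betaC} really is the Cauchy transform of the measure \eqref{freebeta}, which is exactly the delicate integral computation (over a support possibly outside $[0,1]$, with the poles of $\tfrac{1}{x(1-x)}$ on either side of it) that your plan tries to sidestep.

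For comparison, the paper's proof avoids both issues entirely: it exhibits an explicit affine bijection between the free-binomial family and the two-parameter Askey--Wilson family \eqref{AW}, whose exact validity condition $ab<1$ is known from \cite{BrycProcFreeMeix}. Since an affine image of a probability measure is a probability measure, positivity and total mass $1$ are inherited for free, and the parameter set $G$ is obtained by transporting $H=\{(a,b):1-ab>0,\ ab\neq 0\}$ through the reparametrization $\psi$ (a bijection modulo $(a,b)\sim(-a,-b)$). This is why the paper never needs to know where the support lies or to evaluate any integral. If you want to salvage the direct approach, you would have to split into the regimes $\sigma+\theta-1>0$ (support in $(0,1)$, positive prefactor, possible atoms) and $\sigma+\theta<0$, $\sigma\theta<0$ (support outside $[0,1]$, negative prefactor), prove reality of $x_\pm$ and the sign of the density in each regime, and actually compute $\int_{x_-}^{x_+}\tilde\beta$ in both cases to check the mass balance against the atoms.
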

\begin{proof}
Recall a result from \cite{BrycProcFreeMeix} which says that the two parameters Askey-Wilson probability measure, which has the form
\be
\label{AW}
\nu(dx)=\frac{2(1-ab)}{\pi}\frac{\sqrt{1-x^2}}{(1+a^2-2ax)(1+b^2-2bx)}dx+
\frac{a^2-1}{a^2-ab}\mathbb{I}_{|a|>1}\delta_\frac{a+1/a}{2}+
\frac{b^2-1}{b^2-ab}\mathbb{I}_{|b|>1}\delta_\frac{b+1/b}{2},
\ee
is well defined iff $ab<1$. With an additional natural  assumption $ab\neq 0$, this probability law can be easily transformed into a free-binomial distribution. Indeed, if we take a random variable $X$ with the above Askey-Wilson distribution, then $Y$ defined as
\be
Y=\frac{a}{(a-b)(ab-1)}\left(2bX-(1+b^2)\right),
\ee
has a free-binomial distribution \eqref{freebeta} with parameters $$(\theta,\,\sigma)=\psi(a,b)=\left(\frac{1-ab}{a(a-b)},\,\frac{1-ab}{b(b-a)}\right).$$

Define now
\be
H=\{(a,b):\,1-ab>0,ab\ne 0\}.
\ee
It is rather immediate to see that if $(a,b)\in H$ then $\psi(a,b)\in G$.

Conversely, define an equivalence relation on $H$ by $(a,b)\sim(-a,-b)$ and note that $\psi$ is a bijection between $H/_\sim$ and  $G$.

Finally, referring to the result in \cite{BrycProcFreeMeix} on Askey-Wilson distributions mentioned above, we conclude that \eqref{freebeta} defines correctly a probability measure (free-binomial distribution) iff $(\sigma,\theta)\in G$.
\end{proof}

Note that in \cite{SaitohYosida} the parameters of free-binomial distributions are $\sigma=n(1-p)$ $\theta=np$, and $n\geq 2$ so above conditions are satisfied. Nevertheless, their parametrization does not cover whole $G$, e.g. the situation when one of the parameters is negative, which is allowed. It is worth to note that our derivation extends free-binomial distribution to the case $\sigma+\theta<0$, in this case continuous part of free-binomial is not supported in $(0,1)$, in case $\sigma<0$ continuous part is supported on $(-\infty,-1)$, in case $\theta<0$ continuous part is supported on $(1,\infty)$.

In CC authors consider complex Wishart matrices $N\times N$, corresponding Gindikin set is $\{1,2,\dots,N-2\}\cup[N-1,\infty)$ (see \cite{PeddadaRichards1991}). To define a beta matrix as $\bf Z=(X+Y)^{-1/2}X(X+Y)^{-1/2}$, where $\bf X, Y$ are independent Wishart matrices, matrix $\bf X+Y$ must be invertible, so one has to assume $p_N+q_N>N-1$. For existence of non-degenerate asymptotic distributions it is necessary that $p_N/N\to\sigma>0$ and $q_N/N\to\theta>0$. Moreover, either $\sigma+\theta-1>0$ and $(\sigma,\theta)\in G$, or $\sigma+\theta=1$ and the limit distribution (in non commutative sense) of beta matrices has only discret part. In the sequel we are concerned only with case $(\sigma,\theta)\in G$ and $\sigma,\theta>0$.

The main result of this section, as announced in Introduction, is a direct dual version of Lukacs characterization.
\begin{theorem}\label{direc}
Let $(\mathcal{A},\,\varphi)$ be a $W^*$-probability space. Let $\V$ and $\U$ in $\mathcal{A}$ be freely independent, such that $\V$ is free-Poisson with parameters $(\lambda,\,\alpha)$ and $\U$ is free-binomial with parameters $(\sigma,\theta)$, $\sigma+\theta=\lambda$. Define
\be\label{XY}
\X=\V^{\frac{1}{2}}\,\U\,\V^{\frac{1}{2}}\qquad\mbox{and}\qquad\Y=\V-\V^{\frac{1}{2}}\,\U\,\V^{\frac{1}{2}}.
\ee

Then $\X$ and $\Y$ are freely independent and their distributions are free-Poisson with parameters $(\theta,\alpha)$  and $(\sigma,\alpha)$, respectively.
\end{theorem}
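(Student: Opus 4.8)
The plan is to prove the two assertions separately: first that each of $\X$ and $\Y$ has the claimed free-Poisson law, which is a computation with the $S$-transform, and then the genuinely harder statement that $\X$ and $\Y$ are free. The computational engine common to both parts is the following reduction. Since $\varphi$ is tracial and $\X=\V^{\frac12}\U\V^{\frac12}$, $\Y=\V^{\frac12}(\I-\U)\V^{\frac12}$, every joint moment of $(\X,\Y)$ collapses, after absorbing adjacent half-powers via $\V^{\frac12}\V^{\frac12}=\V$ and cycling the two outermost ones, to a moment of the form $\varphi(P_1\V P_2\V\cdots P_m\V)$ with each $P_i\in\{\U,\I-\U\}$. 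Thus the entire joint distribution of $(\X,\Y)$ is governed by alternating moments of $\V$ against $\U$ and $\I-\U$, and these are computable because $\U$ and $\V$ are free.

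For the distributions I would first record $S_\V$. From $r_{\nu(\lambda,\alpha)}(z)=\lambda\alpha/(1-\alpha z)$ and the defining relation \eqref{Str} one gets $S_\V(z)=\tfrac{1}{\alpha(\lambda+z)}$. Next I would compute the $S$-transform of the free-binomial $\U$, either from its Cauchy transform \eqref{betaC} through the moment generating function and \eqref{MSr}, or from the Askey--Wilson presentation used in the proof of the Proposition; the outcome is a ratio of linear factors, $S_\U(z)=\tfrac{\lambda+z}{\rho+z}$ for the appropriate parameter $\rho$. By multiplicativity \eqref{Scon}, $S_\X=S_\U\,S_\V=\tfrac{1}{\alpha(\rho+z)}$, which is exactly the $S$-transform of a free-Poisson law; the reflection $\U\mapsto\I-\U$, which only interchanges the two free-binomial parameters, then delivers the law of $\Y$. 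That the two rates add up to $\lambda$ is forced by $\X+\Y=\V$, and is the free analogue of ``thinning'' a Poisson rate.

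The core of the theorem is freeness of $\X$ and $\Y$, which I would establish by showing that every mixed free cumulant $\mathcal{R}_n(\Z_1,\dots,\Z_n)$ with $\Z_i\in\{\X,\Y\}$ not all equal vanishes. Feeding the reduction above into the moment--cumulant formula \eqref{BLS} and using freeness of $\U$ and $\V$, the only surviving cumulant blocks are those homogeneous in $\V$, with value $\mathcal{R}_k(\V,\dots,\V)=\alpha^k\lambda$, and those homogeneous in the $\U$-algebra; and by multilinearity together with the vanishing of any cumulant containing $\I$, a $\U$-type block evaluated on factors $P_i$ equals $(-1)^{\#\{i:P_i=\I-\U\}}$ times the corresponding free-binomial cumulant. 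The claim is that, after this substitution, all mixed cumulants of $(\X,\Y)$ vanish, equivalently the reduced moments factor exactly as for two free free-Poisson elements. I have checked the base case $n=2$: here $\varphi(\X\Y)=\varphi(\U\V^2)-\varphi(\U\V\U\V)$ evaluates to $\alpha^2\sigma\theta=\varphi(\X)\varphi(\Y)$, the covariance cancelling precisely because $\lambda=\sigma+\theta$.

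The main obstacle is this general combinatorial cancellation for all $n$. It requires both a usable description of the free-binomial cumulants and an argument that, for every alternating pattern of $\X$'s and $\Y$'s, the signed sum over non-crossing partitions telescopes to zero, the identity $\lambda=\sigma+\theta$ being the pivot, as the $n=2$ computation already signals. This is exactly where I would import and adapt the generalized-moment bookkeeping of CC. Should the cumulant route prove unwieldy, an alternative is to bypass cumulants and verify directly that the reduced moments $\varphi(P_1\V\cdots P_m\V)$ factor in the manner characterizing the joint law of two free free-Poisson elements; this trades the cumulant algebra for a moment recursion but leaves the same identity $\lambda=\sigma+\theta$ doing the essential work.
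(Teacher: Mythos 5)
Your proposal does not prove the theorem: the freeness of $\X$ and $\Y$, which is the entire content of the result (the marginal laws are the easy part, and your $S$-transform computation $S_{\X}=S_{\U}S_{\V}$ for them is fine), is left as an unproved ``claim.'' You reduce freeness to the assertion that all mixed free cumulants $\mathcal{R}_n(\Z_1,\dots,\Z_n)$, $\Z_i\in\{\X,\Y\}$, vanish, you verify only the case $n=2$, and you then explicitly defer the general case (``the main obstacle'') to an unspecified adaptation of CC's bookkeeping or to an unspecified moment recursion. That general cancellation is genuinely hard: since $\X=\V^{1/2}\U\V^{1/2}$ and $\Y=\V^{1/2}(\I-\U)\V^{1/2}$ are products, expressing their joint cumulants in terms of the free cumulants of $\U$ and $\V$ requires the formula for cumulants with products as arguments (sums over non-crossing partitions of a doubled index set), and one would also need a workable closed form for the free-binomial cumulants; no telescoping identity making everything vanish for arbitrary alternating patterns of $\X$'s and $\Y$'s is exhibited or cited. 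As written, the argument establishes the conclusion only at the level of second mixed moments.

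It is worth noting that the paper's own proof sidesteps exactly this combinatorics, which is evidence that it is not routine. The paper realizes $(\U,\V)$ as the limit in non-commutative distribution of a pair $({\bf U}_N,{\bf V}_N)$ of independent beta and complex Wishart matrices (CC, Prop.~5.1 and Th.~5.2), applies the finite-dimensional matrix Lukacs property of Casalis--Letac to conclude that ${\bf X}_N={\bf V}_N^{1/2}{\bf U}_N{\bf V}_N^{1/2}$ and ${\bf Y}_N={\bf V}_N-{\bf X}_N$ are \emph{independent} Wishart matrices for every $N$, invokes CC's asymptotic freeness of independent Wishart matrices to obtain a free pair $(\X',\Y')$ of free-Poisson variables as their limit, and finally uses traciality (the same reduction of joint moments of $(\X,\Y)$ to polynomials in $(\U,\V)$ that opens your proposal) to identify the joint distribution of $(\X,\Y)$ with that of $(\X',\Y')$. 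So the cancellation you flag as an obstacle is precisely what the paper outsources to random-matrix asymptotics; to make your route work you would have to actually carry out that combinatorial argument, and your proposal does not.
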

Throughout this paper we use the framework of $W^*$-probability space, since essentially we work with conditional expectations, however the above theorem holds true in a more general setting when  $(\mathcal{A},\,\varphi)$ is a $C^*$-probability space with tracial state (see \cite[Chapter 3]{NicaSpeicherLect}) with exactly the same proof.
\begin{proof}
Since freeness is defined for subalgebras then, without any loss of generality, instead of $\V$ we can take $\frac{1}{\alpha}\V$ which is free-Poisson with the jump size equal 1.
Consider a non-commutative probability space $(\mathcal{A}_N,\,\varphi_N)$ of $p$-integrable for any $1\le p<\infty$ random matrices of dimension $N\times N$ defined on a classical probability space $(\Omega,\mathcal{F},\P)$ with $\varphi_N({\bf A})=\frac{1}{N}\,\E\Tr{\bf A}$ for any ${\bf A}\in\mathcal{A}_N$. From the proof of Th. 5.2 and Prop. 5.1 in CC, in which  asymptotic freeness of $\U$ and $\V$ is stated, it follows that there exist independent sequences $({\bf U}_N)_N$ and $({\bf V}_N)_N$ of complex $N\times N$ matrices  such  that $({\bf U}_N,\,{\bf V}_N)$ converges in distribution in the non-commutative sense (as elements of non-commutative probability spaces $(\mathcal{A}_N,\varphi_N)$) to $(\U,\V)$ which are freely independent. Moreover, ${\bf U}_N$ is a beta matrix with suitable positive parameters $p_N,\,q_N$ such that $\frac{p_N}{N}\to \sigma$, $\frac{q_N}{N}\to\theta$ and ${\bf V}_N$ is a Wishart matrix with parameters $p_N+q_N$ and $\frac{1}{N}{\bf I}_N$, where ${\bf I}_N$ is an $N\times N$ identity matrix. It is well known in such case, see e.g. \cite{CasalisLetac96}, that $${\bf X}_N={\bf V}^{\frac{1}{2}}_N\,{\bf U}_N\,{\bf V}^{\frac{1}{2}}_N\qquad\mbox{and}\qquad{\bf Y}_N={\bf V}_N-{\bf V}^{\frac{1}{2}}_N\,{\bf U}_N\,{\bf V}^{\frac{1}{2}}_N$$ are independent complex Wishart matrices with parameters $(p_N,\frac{1}{N}{\bf I}_N)$ and $(q_N,\frac{1}{N}{\bf I}_N)$, respectively. By Th. 5.2 from CC it follows that $({\bf X}_N,\,{\bf Y}_N)$ are asymptotically free. Therefore, see Prop. 4.6 in CC, it follows that there exist freely independent non-commutative variables $\X'$ and $\Y'$ with free-Poisson distributions with jump parameter 1 and rate parameters $\sigma$ and $\theta$, respectively, such that $({\bf X}_N,{\bf Y}_N)$ converges in distribution (in the non-commutative sense) to $(\X',\,\Y')$.

By asymptotic freeness it follows that
\be\label{asmom}
\lim_{N\to\infty}\,\varphi_N(P({\bf X}_N,{\bf Y}_N))=\varphi(P(\X',\,\Y'))
\ee
for an arbitrary non-commutative polynomial $P\in\C\,\langle x_1,x_2\rangle$. On the other hand by the definition of ${\bf X}_N$ and ${\bf Y}_N$
$$
\varphi_N(P({\bf X}_N,\,{\bf Y}_N))=\varphi_N(P({\bf V}_N-{\bf V}_N^{1/2}{\bf U}_N{\bf V}_N^{1/2},\;{\bf V}_N^{1/2}{\bf U}_N{\bf V}_N^{1/2})).
$$
By the tracial property of $\varphi_N$ the last expression can be written as
$$
\varphi_N(Q({\bf U}_N,\,{\bf V}_N)),
$$
for some polynomial $Q$ from $\C\,\langle x_1,x_2\rangle$. Since $({\bf U}_N,\,{\bf V}_N)$ converge in distribution (in non-commutative sense) to $(\U,\,\V)$ it follows that
$$
\lim_{N\to\infty}\,\varphi_N(P({\bf X}_N,{\bf Y}_N))=\lim_{N\to\infty}\,\varphi_N(Q({\bf U}_N,\,{\bf V}_N))=\varphi(Q(\U,\,\V)).
$$
Using the tracial property of $\varphi$ we can return from $Q$ to $P$, so that
$$
\varphi(Q(\U,\,\V))=\varphi(P(\V-\V^{1/2}\,\U\V^{1/2},\,\V^{1/2}\,\U\V^{1/2}))=\varphi(P(\X,\,\Y)).
$$
Therefore \eqref{asmom} implies that for any $P\in\C\,\langle x_1,x_2\rangle$
$$
\varphi(P(\X',\,\Y'))=\varphi(P(\X,\,\Y)).
$$
Consequently, they have the same distribution.
\end{proof}

\begin{corollary}
\label{coro}
Let $\U$ and $\V$ be freely independent random variables in a $W^*$-probability space. Assume that $\V$ is free-Poisson with parameters $\theta+\sigma$ and $\alpha$ and $\U$ is free-binomial with parameters $\sigma$ and $\theta$. Then
$$
\varphi\left(\left.\V-\V^{\frac{1}{2}}\,\U\,\V^{\frac{1}{2}}\right|\V^{\frac{1}{2}}\,\U\,\V^{\frac{1}{2}}\right)=\theta\alpha\,\I
$$
and
$$
\varphi\left(\left.\left(\V-\V^{\frac{1}{2}}\,\U\,\V^{\frac{1}{2}}\right)^2\right|\V^{\frac{1}{2}}\,\U\,\V^{\frac{1}{2}}\right)=\theta(\theta+1)\alpha^2\,\I.
$$
\end{corollary}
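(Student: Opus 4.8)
The plan is to obtain both identities as immediate consequences of Theorem~\ref{direc} combined with the freeness property \eqref{ce2} of the non-commutative conditional expectation. Set $\X=\V^{\frac12}\,\U\,\V^{\frac12}$ and $\Y=\V-\V^{\frac12}\,\U\,\V^{\frac12}$, so that the two quantities to be evaluated are $\varphi(\Y\,|\,\X)$ and $\varphi(\Y^2\,|\,\X)$. Since $\V$ is free-Poisson with rate $\sigma+\theta$, $\U$ is free-binomial with parameters $(\sigma,\theta)$, and $\U,\V$ are freely independent, Theorem~\ref{direc} applies and tells us that $\X$ and $\Y$ are freely independent and that $\Y$ is free-Poisson with rate $\theta$ and jump size $\alpha$.

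First I would reduce each conditional moment to a scalar. The conditional expectation $\varphi(\cdot\,|\,\X)$ is the projection onto the von Neumann algebra generated by $\X$ and $\I$. Freeness of $\X$ and $\Y$ means that this algebra is freely independent from the one generated by $\Y$; as $\Y^2$ also lies in the latter algebra, both $\Y$ and $\Y^2$ are free from $\X$. Applying \eqref{ce2} of Lemma~\ref{conexp} to each of them yields
\[
\varphi(\Y\,|\,\X)=\varphi(\Y)\,\I \qquad\text{and}\qquad \varphi(\Y^2\,|\,\X)=\varphi(\Y^2)\,\I,
\]
so the problem is reduced to computing the first two moments of $\Y$.

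Finally I would read off these moments from the free cumulants. As $\Y$ is free-Poisson of rate $\theta$ and jump $\alpha$, one has $\mathcal{R}_n(\Y)=\theta\alpha^n$ for every $n$. Hence $\varphi(\Y)=\mathcal{R}_1(\Y)=\theta\alpha$, while summing over the two partitions in $NC(2)$ in the moment--cumulant formula gives
\[
\varphi(\Y^2)=\mathcal{R}_2(\Y)+\mathcal{R}_1(\Y)^2=\theta\alpha^2+(\theta\alpha)^2=\theta(\theta+1)\alpha^2,
\]
which are precisely the two asserted constants.

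The entire weight of the argument rests on the freeness of $\X$ and $\Y$ supplied by Theorem~\ref{direc}; once that is granted the corollary is routine and I do not expect a genuine obstacle. The two points deserving care are that \eqref{ce2} is legitimately invoked for $\Y^2$ (immediate, since $\Y^2$ belongs to the algebra generated by $\Y$ and is therefore free from $\X$), and that one uses the correct rate $\theta$ for $\Y$ (consistent with the random-matrix identification in the proof of Theorem~\ref{direc}), which is what yields $\theta$ rather than $\sigma$ in the final constants. As an independent check, $\varphi(\Y)$ and $\varphi(\Y^2)$ can also be computed directly from the mixed moments of $\U$ and $\V$ via freeness of $\U,\V$ and the known moments of the free-Poisson and free-binomial laws.
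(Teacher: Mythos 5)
Your proof is correct and takes essentially the same route as the paper: freeness of $\X=\V^{1/2}\U\V^{1/2}$ and $\Y=\V-\V^{1/2}\U\V^{1/2}$ from Theorem~\ref{direc}, constancy of the two conditional moments via \eqref{ce2} of Lemma~\ref{conexp}, and then the first two moments of a free-Poisson variable from its cumulants. One remark: you were right to assign rate $\theta$ to $\Y$ on the basis of the random-matrix identification in the \emph{proof} of Theorem~\ref{direc} rather than its statement (whose labels $(\theta,\alpha)$ and $(\sigma,\alpha)$ are in fact swapped, as the computation $\varphi(\U)=\sigma/(\sigma+\theta)$, hence $\varphi(\X)=\sigma\alpha$ and $\varphi(\Y)=\theta\alpha$, shows); the paper's own proof of the corollary instead conditions in the reverse direction, writing $\varphi(\X|\Y)$ and $\varphi(\X^2|\Y)$, and leans on those swapped labels, so your version is the one that literally matches the corollary as stated.
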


\begin{proof}
By Theorem \ref{direc} we know that $\X=\V^{\frac{1}{2}}\,\U\,\V^{\frac{1}{2}}$ and $\Y=\V-\V^{\frac{1}{2}}\,\U\,\V^{\frac{1}{2}}$ are freely independent. Therefore \eqref{ce2} of Lemma \ref{conexp} implies
$$
\varphi(\X|\Y)=\varphi(\X)\,\I
$$
and
$$
\varphi(\X^2|\Y)=\varphi(\X^2)\,\I.
$$
Due to Theorem \ref{direc} $\X$ is free-Poisson with parameters $\theta$, $\alpha$. It is well known that for free-Poisson $\X$ its first two moments are $\varphi(\X)=\theta\alpha$ and $\varphi(\X^2)=\theta(\theta+1)\alpha^2$.
\end{proof}

\section{Non-commutative dual Lukacs type regression}
In this section we formulate and prove the main result of the paper which may be treated as a counterpart of the regression characterization of free-Poisson distribution given in Th. 3.2 (ii) of \cite{BoBr2006}. It is also a non-commutative version of Th. 1 of \cite{BobWes2002Dual}, and a converse to Cor. \ref{coro} above.
\begin{theorem}\label{main}
Let $(\mathcal{A},\,\varphi)$ be a $W^*$-probability space and $\U,\,\V$ be non-commutative variables in $(\mathcal{A},\,\varphi)$ which are freely independent, $\V$ has a distribution compactly supported in $(0,\infty)$ and  distribution of $\U$ is supported in $[0,1]$. Assume that there exist real constants $c_1$ and $c_2$ such that
\be\label{reg1}
\varphi\left(\left.\V-\V^{\frac{1}{2}}\,\U\,\V^{\frac{1}{2}}\right|\V^{\frac{1}{2}}\,\U\,\V^{\frac{1}{2}}\right)=c_1\,\I
\ee
and
\be\label{reg2}
\varphi\left(\left(\left.\V-\V^{\frac{1}{2}}\,\U\,\V^{\frac{1}{2}}\right)^2\right|\V^{\frac{1}{2}}\,\U\,\V^{\frac{1}{2}}\right)=c_2\,\I.
\ee

Then $\V$ has free-Poisson distribution, $\nu(\lambda,\alpha)$ with $\lambda=\sigma+\theta$, $\left(\sigma>0,\theta=\frac{c_1^2}{c_2-c_1^2}>0\right)$, $\alpha=\frac{c_2-c_1^2}{c_1}>0$ and $\U$ has free-binomial distribution,  $\beta(\sigma,\theta)$.
\end{theorem}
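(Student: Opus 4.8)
The plan is to convert the two conditional-moment hypotheses into functional equations for $S$-transforms. Write $\X=\V^{\frac12}\U\V^{\frac12}$ and $\Y=\V-\X$, so that \eqref{reg1}--\eqref{reg2} read $\varphi(\Y|\X)=c_1\I$ and $\varphi(\Y^2|\X)=c_2\I$. Because the right-hand sides are scalar, pairing with $\X^n$ through \eqref{ce1} yields
\be
\varphi(\Y\,\X^n)=c_1\,\varphi(\X^n),\qquad \varphi(\Y^2\,\X^n)=c_2\,\varphi(\X^n),\qquad n\ge0.
\ee
Multiplying by $z^n$, summing, and substituting $\Y=\V-\X$, I would recast this as two functional equations relating the resolvent series $g(z):=\varphi((\I-z\X)^{-1})$, $h_1(z):=\varphi(\V(\I-z\X)^{-1})$ and $h_2(z):=\varphi(\V^2(\I-z\X)^{-1})$; the moment identities above express $h_1,h_2$ linearly in terms of $g$ with coefficients $c_1,c_2$ and $\frac1z,\frac1{z^2}$.

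The first key step is to pass from $\X$ to $\U,\V$. Since $\V$ is supported compactly in $(0,\infty)$ it is invertible, and the elementary identity $(\I-z\X)^{-1}=\V^{\frac12}(\I-z\,\U\V)^{-1}\V^{-\frac12}$ together with traciality turns each of $g,h_1,h_2$ into $\varphi(\V^k(\I-z\,\U\V)^{-1})$, $k=0,1,2$. The second, and main, step is to evaluate these using freeness of $\U$ and $\V$: conditioning on the algebra generated by $\V$ and using \eqref{ce2} one obtains the multiplicative subordination
\be
\varphi((\I-z\,\U\,\V)^{-1}\,|\,\V)=(\I-\omega(z)\,\V)^{-1}
\ee
for a scalar analytic $\omega$, whence $\varphi(\V^k(\I-z\,\U\V)^{-1})=\varphi(\V^k(\I-\omega(z)\V)^{-1})$ for all $k$, so that all three series are governed by the single moment series $M_\V$ of $\V$ evaluated at $\omega$. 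Matching this with \eqref{Str}, \eqref{Scon}, \eqref{MSr} identifies $\omega$ through $M_\V(\omega(z))=M_\X(z)=:\chi(z)$ and $\omega(z)=z/S_\U(\chi(z))$. I expect this freeness computation---establishing the subordination and expressing the $\V$-weighted resolvents via $M_\V$ and $S_\U$---to be the main obstacle; everything afterwards is algebra.

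With these substitutions the first identity collapses to a strikingly clean relation between the two $S$-transforms,
\be\label{sstar}
S_{\U}(s)-1=c_1\,S_{\U}(s)\,S_{\V}(s),
\ee
valid for $s$ in a neighbourhood of $0$ (at $s=0$ it already gives $\varphi(\V)=c_1/(1-\varphi(\U))$). For the second identity I would substitute $z$ and $\omega$ in terms of $\chi$, $S_\U(\chi)$, $S_\V(\chi)$, and then use \eqref{sstar} both to replace the cross term $c_1S_\U S_\V$ and to eliminate $S_\V$; the outcome is an equation that is \emph{linear} in $a:=S_\U(\chi)$ with coefficients rational in $\chi$. Consequently $S_\U$ is uniquely determined as a Möbius function of its argument.

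Finally I would identify this unique solution. A direct verification shows that $S_\U(s)=\frac{s+\sigma+\theta}{s+\sigma}$ satisfies both \eqref{sstar} and the linear equation precisely when $\theta=\frac{c_1^2}{c_2-c_1^2}$ and $\alpha=\frac{c_2-c_1^2}{c_1}$, with $\sigma>0$ left free and fixed by $\varphi(\U)=\frac{\sigma}{\sigma+\theta}$; by uniqueness this \emph{is} $S_\U$, and then \eqref{sstar} forces $S_\V(s)=\frac{1}{\alpha(s+\sigma+\theta)}$. Using $S_{\nu(\lambda,\alpha)}(s)=\frac1{\alpha(\lambda+s)}$ and $S_\X=S_\U S_\V$ as in Theorem \ref{direc}, these are exactly the $S$-transforms of the free-Poisson law $\nu(\sigma+\theta,\alpha)$ and the free-binomial law $\beta(\sigma,\theta)$. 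Positivity $c_2>c_1^2$ (hence $\theta,\alpha>0$) follows from non-degeneracy, since $\varphi((\Y-c_1\I)^2)=c_2-c_1^2>0$, and admissibility of $(\sigma,\theta)$ is ensured by the Proposition. As a compactly supported distribution is determined by its $S$-transform, this proves $\V\sim\nu(\sigma+\theta,\alpha)$ and $\U\sim\beta(\sigma,\theta)$.
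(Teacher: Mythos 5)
Your overall strategy is viable and genuinely different from the paper's, and the algebra downstream of your subordination identity is correct: I verified that the first regression condition collapses to $S_\U(s)-1=c_1\,S_\U(s)\,S_\V(s)$; that, after substituting $A=1+M_\V(\omega)$, $B=M_\V(\omega)/\omega$, $C=\bigl(M_\V(\omega)-\omega\varphi(\V)\bigr)/\omega^2$ into the second condition and eliminating $S_\V$ via that relation, a common factor $S_\U(\chi)-1$ cancels and what remains is linear in $S_\U(\chi)$; and that the unique M\"obius solution, pinned down by $\varphi(\U)=\sigma/(\sigma+\theta)$, is $S_\U(s)=\frac{s+\sigma+\theta}{s+\sigma}$, which then forces $S_\V(s)=\frac{1}{\alpha(s+\sigma+\theta)}$ with $\theta=\frac{c_1^2}{c_2-c_1^2}$ and $\alpha=\frac{c_2-c_1^2}{c_1}$ --- exactly the free-binomial and free-Poisson laws of the statement. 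The paper reaches the same endpoint by a purely combinatorial route: it never invokes subordination, but expands the generating functions of $\varphi((\V\U)^n)$, $\varphi(\V(\V\U)^n)$, $\varphi(\V^2(\V\U)^n)$ through the moment--cumulant formula \eqref{BLS} (mixed free cumulants of $\U$ and $\V$ vanish), obtaining $A(z)=1+zD(z)r(zD(z))$, $B(z)=zD(z)r^2(zD(z))+r(zD(z))$, etc., where $r=r_\V$ and $D$ generates the sequence $\varphi(\U(\V\U)^n)$; your $\omega$ is related to these objects by $\omega(z)=zD(z)/A(z)$. The paper's eliminations are then performed on $r$ and $h=M_{\V\U}$, and $\U$ is identified at the very end via its Cauchy transform \eqref{betaC} rather than by uniqueness of a M\"obius $S$-transform. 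Granted its key input, your version is shorter and more conceptual.

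The genuine gap is that key input. The identity $\varphi\bigl((\I-z\,\U\V)^{-1}\,\big|\,\V\bigr)=\bigl(\I-\omega(z)\,\V\bigr)^{-1}$ with a scalar analytic $\omega$ does \emph{not} follow from \eqref{ce2} of Lemma \ref{conexp}, which only gives $\varphi(\U|\V)=\varphi(\U)\,\I$. Already at order $z^2$ one needs $\varphi(\U\V\U|\V)=\varphi(\U)^2\V+\bigl(\varphi(\U^2)-\varphi(\U)^2\bigr)\varphi(\V)\,\I$, which \eqref{ce2} does not provide, and the assertion that the whole family of conditional expectations $\varphi((\U\V)^n|\V)$ assembles into a single resolvent of $\V$ is precisely Biane's multiplicative subordination theorem (equivalently, Voiculescu's subordination via the free difference quotient) --- a substantive result that you neither prove nor cite, and that you yourself flag as ``the main obstacle''. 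As written, the proposal therefore assumes the hardest step. The gap is reparable in two ways: cite the subordination theorem, which does apply in this setting (tracial $W^*$-probability space, $\V$ positive, bounded and invertible, $\U$ bounded), or derive the three identities for $A$, $B$, $C$ combinatorially --- which is in effect what the paper's self-contained argument via \eqref{BLS} accomplishes. A smaller point: your positivity argument $c_2-c_1^2=\varphi((\Y-c_1\I)^2)>0$ appeals to non-degeneracy ($\U\neq 0$, $\U\neq\I$), which is only implicit in the theorem; it should be made explicit, since it is also what licenses dividing by $S_\U(\chi)-1$ and guarantees that $\chi$ is non-constant so that the functional identities propagate from the range of $\chi$ to a neighbourhood of $0$ by analytic continuation.
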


\begin{proof}
For any positive integer $n$ multiply both sides of \eqref{reg1} and \eqref{reg2} by $\left(\V^{\frac{1}{2}}\,\U\,\V^{\frac{1}{2}}\right)^n$ and take expectations $\varphi$. Therefore, by \eqref{ce1} of  Lemma \ref{conexp} we obtain, respectively,
\be\label{var1}
\varphi(\V(\V\U)^n)-\varphi((\V\U)^{n+1})=c_1\varphi((\V\U)^n)
\ee
and
\be\label{var2}
\varphi(\V^2(\V\U)^n)-2\varphi(\V(\V\U)^{n+1})+\varphi((\V\U)^{n+2})=c_2\varphi((\V\U)^n).
\ee

Introduce three sequences of numbers $(\alpha_n)_{n\ge 0}$, $(\beta_n)_{n\ge 0}$ and $(\gamma_n)_{n\ge 0}$ as follows
$$
\alpha_n=\varphi((\V\U)^n),\qquad \beta_n=\varphi(\V(\V\U)^n),\qquad\mbox{and}\qquad \gamma_n=\varphi(\V^2(\V\U)^n),\quad n=0,1,\ldots.
$$

Then equations \eqref{var1} and \eqref{var2} can be rewritten as
\be\label{rew1}
\beta_n-\alpha_{n+1}=c_1\alpha_n
\ee
and
\be\label{rew2}
\gamma_n-2\beta_{n+1}+\alpha_{n+2}=c_2\alpha_n.
\ee

Multiplying \eqref{rew1} and \eqref{rew2} by $z^n$, $z\in\C$, and summing up with respect to $n=0,1,\ldots$, we obtain the equations
\be\label{equ1}
B(z)-\frac{1}{z}(A(z)-\alpha_0)=c_1A(z)
\ee
and
\be\label{equ2}
C(z)-\frac{2}{z}(B(z)-\beta_0)+\frac{1}{z^2}(A(z)-\alpha_1z-\alpha_0)=c_2A(z),
\ee
where
$$
A(z)=\sum_{n=0}^{\infty}\,\alpha_n\,z^n,\qquad B(z)=\sum_{n=0}^{\infty}\,\beta_n\,z^n,\qquad C(z)=\sum_{n=0}^{\infty}\,\gamma_n\,z^n
$$
and the above series converge at least in some neighbourhood of zero, due to the fact that supports of $\U$ and $\V$ are compact. Note also, that since $\alpha_0=1$, we have $A=M_{\V\U}+1$.

Before we proceed further with equations \eqref{equ1} and \eqref{equ2} we need to establish some useful relations between sequences $(\alpha_n)$, $(\beta_n)$ and $(\gamma_n)$. To this end we need to define additional sequence $(\delta_n)_{n\ge 0}$, by setting
$$\delta_n=\varphi(\U(\V\U)^n),\qquad n=0,1,\ldots$$

Consider first the sequence $(\alpha_n)$. Note that by formula \eqref{BLS} it follows that
\begin{align*}
\alpha_n=&\Rr_1\varphi(\U(\U\V)^{n-1})
\\
&+\Rr_2[\varphi(\U)\varphi(\U(\U\V)^{n-2})+\varphi(\U\V\U)\varphi(\U(\V\U)^{n-3})+\ldots+\varphi(\U(\V\U)^{n-2})\varphi(\U)]\\
&+\ldots\\
&+\Rr_n\varphi^n(\U),
\end{align*}
where $\Rr_n=\Rr_n(\V)$ is the $n$th cumulant of the variable $\V$. Therefore, in terms of $\delta_n$'s we obtain
$$
\alpha_n=\Rr_1\delta_{n-1}+\Rr_2(\delta_0\delta_{n-2}+\delta_1\delta_{n-3}+\ldots+\delta_{n-2}\delta_0)+\ldots+\Rr_n\delta_0^n
$$
and thus for any $n=1,2,\ldots$
$$
\alpha_n=\sum_{k=1}^n\,\Rr_k\sum_{i_1+\ldots+i_k=n-k}\,\delta_{i_1}\ldots\delta_{i_k}.
$$
Consequently,
\begin{align*}
A(z)=&1+\sum_{n=1}^{\infty}\,z^n\,\sum_{k=1}^n\,\Rr_k\sum_{i_1+\ldots+i_k=n-k}\,\delta_{i_1}\ldots\delta_{i_k}
=1+\sum_{n=1}^{\infty}\,\sum_{k=1}^n\,\Rr_k\,z^k\sum_{i_1+\ldots+i_k=n-k}\,\delta_{i_1}z^{i_1}\ldots\delta_{i_k}z^{i_k}
\\
=&1+\sum_{k=1}^{\infty}\Rr_k\,z^k\sum_{n=k}^{\infty}\,\sum_{i_1+\ldots+i_k=n-k}\,\delta_{i_1}z^{i_1}\ldots\delta_{i_k}z^{i_k}
=1+\sum_{k=1}^{\infty}\Rr_k\,z^k\sum_{m=0}^{\infty}\,\sum_{i_1+\ldots+i_k=m}\,\delta_{i_1}z^{i_1}\ldots\delta_{i_k}z^{i_k}
\\=&1+\sum_{k=1}^{\infty}\,\Rr_k\,z^k\,\left(\sum_{i=0}^{\infty}\,\delta_iz^i\right)^k.
\end{align*}
Therefore
$$
A(z)=1+\sum_{k=1}^{\infty}\,\Rr_k\,(zD(z))^k,
$$
where $D$ is the generating function of the sequence $(\delta_n)$, that is $D(z)=\sum_{i=0}^{\infty}\,\delta_iz^i$.
Finally, with $r$ being the $r$-transform of $\V$, that is $r(z)=\sum_{k=0}^{\infty}\,R_{k+1}z^k$ we obtain
\be\label{AA}
A(z)=1+zD(z)r(zD(z)).
\ee

Similarly, by \eqref{BLS},
$$
\beta_n=\Rr_1\alpha_n+\Rr_2(\alpha_0\delta_{n-1}+\alpha_1\delta_{n-2}+\ldots+\alpha_{n-1}\delta_0)+\ldots+\Rr_{n+1}\alpha_0\delta_0^n
$$
and thus for any $n=0,1,\ldots$
$$
\beta_n=\sum_{k=1}^{n+1}\,\Rr_k\sum_{i_1+\ldots+i_k=n-k+1}\,\alpha_{i_1}\delta_{i_2}\ldots\delta_{i_k}.
$$
Therefore,
\begin{align*}
B(z)=&\sum_{n=0}^{\infty}\,\sum_{k=1}^{n+1}\,\Rr_k\sum_{i_1+\ldots+i_k=n-k+1}\,\alpha_{i_1}\delta_{i_2}\ldots\delta_{i_k}
\\
=&\sum_{n=0}^{\infty}\,\sum_{k=1}^{n+1}\,\Rr_k\,z^{k-1}\sum_{i_1+\ldots+i_k=n-k+1}\,\alpha_{i_1}z^{i_1}\delta_{i_2}z^{i_2}\ldots\delta_{i_k}z^{i_k}
\\
=&\sum_{k=1}^{\infty}\,\Rr_k\,z^{k-1}\sum_{n=k-1}^{\infty}\,\sum_{i_1+\ldots+i_k=n-k+1}\,\alpha_{i_1}z^{i_1}\delta_{i_2}z^{i_2}\ldots\delta_{i_k}z^{i_k}
\\
=&\sum_{k=1}^{\infty}\,\Rr_k\,z^{k-1}\sum_{m=0}^{\infty}\,\sum_{i_1+\ldots+i_k=m}\,\alpha_{i_1}z^{i_1}\delta_{i_2}z^{i_2}\ldots\delta_{i_k}z^{i_k}
\\
=&\sum_{k=1}^{\infty}\,\Rr_k\,z^{k-1}\,A(z)\,D^{k-1}(z)=A(z)r(zD(z)).
\end{align*}
Finally, applying \eqref{AA} we get
\be\label{BB}
B(z)=zD(z)r^2(zD(z))+r(zD(z)).
\ee

The formula for $\gamma_n$ is again based on \eqref{BLS}
\begin{align*}
\gamma_n&=\Rr_1\beta_n+\Rr_2(\alpha_n+\beta_0\delta_{n-1}+\beta_1\delta_{n-2}+\ldots+\beta_n\delta_0)
\\
&+\Rr_3[(\alpha_0\delta_{n-1}+\ldots+\alpha_{n-1}\delta_0)+(\beta_0\delta_0\delta_{n-2}+\ldots+\beta_{n-2}\delta_0^2)]+\ldots+\Rr_{n+2}\alpha_0\delta_0^n
\end{align*}
and thus it splits in two parts for any $n=0,1,\ldots$
$$
\gamma_n=\sum_{k=2}^{n+2}\,\Rr_k\sum_{i_1+\ldots+i_{k-1}=n-k+2}\,\alpha_{i_1}\delta_{i_2}\ldots\delta_{i_{k-1}}+\sum_{k=1}^{n+1}\,\Rr_k\sum_{i_1+\ldots+i_k=n-k+1}\,\beta_{i_1}\delta_{i_2}\ldots\delta_{i_k}.
$$
Therefore, also $C(z)$ splits in two parts
$$
C(z)=C_1(z)+C_2(z),
$$
where
$$
C_1(z)=\sum_{n=0}^{\infty}\,z^n\sum_{k=2}^{n+2}\,\Rr_k\sum_{i_1+\ldots+i_{k-1}=n-k+2}\,\alpha_{i_1}\delta_{i_2}\ldots\delta_{i_{k-1}}
$$
and
$$
C_2(z)=\sum_{n=0}^{\infty}\,z^n\sum_{k=1}^{n+1}\,\Rr_k\sum_{i_1+\ldots+i_k=n-k+1}\,\beta_{i_1}\delta_{i_2}\ldots\delta_{i_k}.
$$
The expression for the second part, $C_2$, can be derived exactly in the same way as it was done for $B(z)$. The computation yields
\be\label{C2}
C_2(z)=B(z)r(zD(z))=zD(z)r^3(zD(z))+r^2(zD(z)).
\ee

For the first part the derivation is similar though little more complicated
\begin{align*}
C_1(z)&=\sum_{n=0}^{\infty}\,\sum_{k=2}^{n+2}\,\Rr_k\,z^{k-2}\sum_{i_1+\ldots+i_{k-1}=n-k+2}\,\alpha_{i_1}z^{i_1}\delta_{i_2}z^{i_2}\ldots\delta_{i_{k-1}}z^{i_{k-1}}
\\
&=\sum_{k=2}^{\infty}\,\Rr_k\,z^{k-2}\sum_{m=0}^{\infty}\,\sum_{i_1+\ldots+i_{k-1}=m}\,\alpha_{i_1}z^{i_1}\delta_{i_2}z^{i_2}\ldots\delta_{i_{k-1}}z^{i_{k-1}}
\\
&=\sum_{k=2}^{\infty}\,\Rr_k\,z^{k-2}\,A(z)D^{k-2}(z)=\frac{A(z)}{zD(z)}\,(r(zD(z))-\Rr_1).
\end{align*}
Recalling \eqref{AA} we get
\be\label{C1}
C_1(z)=r(zD(z))[r(zD(z))-\Rr_1]+\frac{r(zD(z))-\Rr_1}{zD(z)}.
\ee

Finally, \eqref{C2} and \eqref{C1} together with $\Rr_1=\beta_0$ give
\be\label{CC}
C(z)=zD(z)r^3(zD(z))+r^2(zD(z))+r(zD(z))[r(zD(z))-\beta_0]+\frac{r(zD(z))-\beta_0}{zD(z)}
\ee

Now we can return to the system of equations \eqref{equ1} and \eqref{equ2}. Plugging expressions \eqref{AA} and \eqref{BB} into \eqref{equ1} we get
\be\label{pom}
zD(z)r^2(zD(z))+(1-D(z))r(zD(z))=c_1(1+zD(z)r(zD(z))).
\ee
Define a new function $h$
\begin{align*}
h(z)=zD(z)r(zD(z))
\end{align*}
note that $h=A-1=M_{\V\U}.$
\newline Multiply both sides of the above equation by $zD(z)$. Then \eqref{pom} can be written as
\be\label{equ1h}
h^2(z)=[(1+c_1z)D(z)-1]h(z)+c_1zD(z).
\ee
Therefore,
$$
h^3(z)=h(z)\,\left([(1+c_1z)D(z)-1]h(z)+c_1zD(z)\right)
$$
and thus
\be\label{equ1hh}
h^3(z)=\left(\left[(1+c_1z)D(z)-1\right]^2+c_1zD(z)\right)\,h(z)+[(1+c_1z)D(z)-1]\,c_1zD(z).
\ee

Similarly, plugging \eqref{AA}, \eqref{BB} and \eqref{CC} into \eqref{equ2} we obtain
\begin{align*}
zD(z)r^3(zD(z))+2r^2(zD(z))-\beta_0r(zD(z))+\frac{r(zD(z))-\beta_0}{zD(z)}+&\\
-\frac{2}{z}\left[zD(z)r^2(zD(z))+r(zD(z))-\beta_0\right]+\frac{1}{z^2}\left[zD(z)r(zD(z))-\alpha_1z\right]&=c_2[zD(z)r(zD(z))+1].
\end{align*}
In terms of $h$ the above equation reads
\begin{align}\label{equh3}
h^3(z)+2(1-D(z))h^2(z)-[\beta_0zD(z)-1+2D(z)-D^2(z)+c_2z^2D^2(z)]h(z)
=&c_2z^2D^2(z)\nonumber
\\
&+\beta_0zD(z)(1-2D(z))\nonumber
\\
&+\alpha_1zD^2(z).
\end{align}
Inserting \eqref{equ1hh} and \eqref{equ1h} into \eqref{equh3} after cancelations we get
\be
\label{funH}
h(z)=\frac{\lambda\alpha D(z)}{c_1\alpha z D(z)+\lambda\alpha-c_1}-1,
\ee
where
$$\alpha=\frac{c_1^2-c_2}{\alpha_1-\beta_0}=\frac{c_2-c_1^2}{c_1},\qquad\lambda=\frac{c_1(\alpha_1+c_1-2\beta_0)}{c_1^2-c_2}=
\frac{c_1^2}{c_2-c_1^2}+\frac{c_1(\alpha_1+2c_1-2\beta_0)}{c_1^2-c_2}=\theta+\sigma.$$
Plugging \eqref{funH} into \eqref{equ1h}, after canceling $D$ (which is allowed at least in a neighbourhood of zero, since $\delta_0>0$) we obtain the following quadratic equation for $D$:
\begin{align*}
\alpha\left(1+c_1\left(1-\frac{1}{\lambda}\right)z\right)zD^2(z)-
\left\{1+\left(\alpha(1-\lambda)+c_1\left(1-\frac{2}{\lambda}\right)\right)z\right\}D(z)
+1-\frac{c_1}{\alpha\lambda}=0.
\end{align*}
We want to express $h(z)$ as a function of $zD(z)$. To this end we write the above equation as $a_1z^2D^2(z)+a_2zD^2(z)+a_3zD(z)+a_4D(z)+a_5=0$, where $a_1,\ldots,a_5$, are suitable coefficients.  Then we substitute  one $D(z)$ in the second term and $D(z)$ in the  fourth term at the right hand side by $$D(z)=\frac{1}{\alpha\lambda}(h(z)+1)(\alpha\lambda + c_1\alpha zD(z)).$$
Note that the last identity is a consequence of \eqref{funH}.\\
After canceling $\alpha c_1zD(z)+\alpha_1$, which is allowed at least in the neighbourhood of zero since $\lim_{z\to 0}\,zD(z)=0$ and $\alpha_1>0$, we obtain,
\begin{align}
\label{hD}
\frac{h(z)}{zD(z)}=\frac{\lambda\alpha}{1-\alpha z D(z)}.
\end{align}
Recall that $h(z)=zD(z)r\left(zD(z)\right)$. Since $r$ is analytic at zero and $\lim_{z\to 0}zD(z)=0$, we conclude that
\begin{align}
r(z)=\lambda\alpha\frac{1}{1-\alpha z}.
\label{equP}
\end{align}
Note that above equation defines the $r$-transform of the free-Poisson distribution with rate $\lambda$, and jump size $\alpha$.

It remains  to show that $\U$ has free-binomial distribution, which can be done through calculating $S$-transforms.

Combining \eqref{funH} and \eqref{hD} we obtain the quadratic equation
\begin{align*}
\alpha z h^2(z)-\left\{1+\left(c_1-\alpha(1+\lambda)\right)z\right\}h(z)
-\left(c_1-\alpha\lambda\right)z=0.
\end{align*}
Since $h=M_{\V\U}$, from the above equation for $\Psi_{\V\U}=h^{-1}$, we get,
\begin{align*}
\alpha \Psi_{\V\U}(z)z^2-\left\{1+\left(c_1-\alpha(1+\lambda)\right)\Psi_{\V\U}\right\}z
-\left(c_1-\alpha\lambda\right)\Psi_{\V\U}(z)=0,
\end{align*}
which implies
\begin{align*}
\Psi_{\V\U}(z)=\frac{z}{(1+z)(\alpha \lambda-c_1+\alpha z)}.
\end{align*}
Now we use equations \eqref{MSr} to find corresponding $S$-transform as
$$S_{\V\U}(z)=\frac{1}{\alpha\lambda-c_1+\alpha z}.$$
Moreover $S$-transform of $\V$ is
$$S_{\V}(z)=\frac{1}{\alpha\lambda+\alpha z}.$$
Since $\U$ and $\V$ are free by (\ref{Scon}) we arrive at
$$S_{\U}(z)=1+\frac{c_1}{\alpha\lambda-c_1+\alpha z}.$$
Now we use \eqref{Str} and \eqref{Crr} to find Cauchy transform for $\U$ as
$$G_{\U}(z)=\frac{1+\frac{c_1}{\alpha}-\lambda+\left(\lambda-2\right)z
+\sqrt{\left(1+\frac{c_1}{\alpha}-\lambda+\left(\lambda-2\right)z\right)^2-
4\left(1-\lambda\right)z(z-1)}}
{2z(1-z)}.$$
From (\ref{betaC}) it follows that $G_{\U}$ is the Cauchy transform of free-binomial distribution with parameters $\sigma,\theta$, ($\lambda-\frac{c_1}{\alpha}=\sigma$, $\frac{c_1}{\alpha}=\theta$).

Finally, let us mention that a $W^*$-probability space with free random variables $\V$ and $\U$ with, respectively, free-Poisson and free-binomial distributions, can be constructed in a standard way as a free product of two $W^*$-probability spaces, one containing free-Poisson random variable, second with free-binomial distribution. For details see \cite{HaagerupLarsen}, \cite{VoiDykNica}.
\end{proof}
Combining Theorems \ref{direc} and \ref{main} we get equivalence, as in the classical situation.
\begin{corollary}
Let $\U,\,\V$ be free random variables in a $W^*$ probability space. Then $\V-\V^{1/2}\U\V^{1/2}$ and $\V^{1/2}\U\V^{1/2}$ are free if and only if $\V$ has free-Poisson distribution and $\U$ has free-binomial distribution.
\end{corollary}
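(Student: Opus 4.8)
The plan is to derive this equivalence as a direct synthesis of Theorems~\ref{direc} and~\ref{main}, treating the two implications separately; write $\X=\V^{1/2}\,\U\,\V^{1/2}$ and $\Y=\V-\V^{1/2}\,\U\,\V^{1/2}$ as in \eqref{XY}. Throughout I would keep the standing positivity assumptions that make the statement meaningful: $\V$ positive with compact support in $(0,\infty)$, so that $\V^{1/2}$ and hence $\X,\Y$ are well defined, and $\U$ supported in $[0,1]$. These are exactly the hypotheses of Theorem~\ref{main}, and are in any case forced once one asks that $\X$ be a positive free-Poisson variable.

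For the implication that freeness of $\X$ and $\Y$ forces the distributions, I would assume $\X$ and $\Y$ freely independent and read off the regression conditions from Lemma~\ref{conexp}. Since $\Y$, and therefore $\Y^2$, lies in the subalgebra generated by $\Y$, which is free from the subalgebra generated by $\X$, equation \eqref{ce2} collapses both conditional moments to scalars:
\[
\varphi(\Y\,|\,\X)=\varphi(\Y)\,\I,\qquad \varphi(\Y^2\,|\,\X)=\varphi(\Y^2)\,\I.
\]
Setting $c_1=\varphi(\Y)$ and $c_2=\varphi(\Y^2)$, conditions \eqref{reg1} and \eqref{reg2} hold verbatim, so Theorem~\ref{main} applies and yields at once that $\V$ is free-Poisson and $\U$ is free-binomial, with the explicit parameters recorded there.

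For the reverse implication I would invoke Theorem~\ref{direc}: given that $\V$ is free-Poisson $\nu(\lambda,\alpha)$ and $\U$ is free-binomial $\beta(\sigma,\theta)$, its conclusion is precisely that $\X$ and $\Y$ are freely independent. The step I expect to be the main obstacle is reconciling the rate-matching constraint $\sigma+\theta=\lambda$ that Theorem~\ref{direc} carries with the literal corollary statement, which mentions no relation between the parameters. I would resolve this by noting that the constraint is not an extra hypothesis but an automatic feature of freeness: if $\X$ and $\Y$ were free, the first implication together with Theorem~\ref{main} identifies the rate of the free-Poisson law of $\V$ as $\sigma+\theta$, so no unmatched pair can yield free $\X,\Y$. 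Hence the correct reading of the equivalence is that $\X$ and $\Y$ are free if and only if $\V$ is free-Poisson and $\U$ is free-binomial with matched rate $\lambda=\sigma+\theta$, and the two implications above establish exactly this.
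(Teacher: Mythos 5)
Your proof is correct and takes essentially the same approach as the paper: the ``only if'' direction reads the constant conditional moments off \eqref{ce2} of Lemma~\ref{conexp} and applies Theorem~\ref{main}, while the ``if'' direction is exactly Theorem~\ref{direc}. Your additional remark on the rate-matching constraint $\sigma+\theta=\lambda$ (and on the standing support assumptions needed to invoke Theorem~\ref{main}) addresses an imprecision the paper's one-line proof glosses over, but it does not alter the argument.
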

\begin{proof}
The "if" part is trivially read out from theorem \ref{direc}. Since freeness implies that conditional moments are constant (see \eqref{ce2} in Lemma \ref{conexp}) the "only if" part follows from Theorem \ref{main}.
\end{proof}
\subsection*{Acknowledgement} The authors thank M. Bo\.zejko and W. Bryc for helpful comments and discussions. They are also grateful to the referee for a careful reading of the manuscript, in particular,  for remarks concerning final stages of the proof of Th. \ref{main}.
\bibliographystyle{plain}
\bibliography{Bibl}

\begin{thebibliography}{10}

\bibitem{anshelevich}
M.~Anshelevich.
\newblock Free martingale polynomials.
\newblock {\em J. Funct. Anal.}, 201(1):228--261, 2003.

\bibitem{BerkoviciPata}
H.~Bercovici and V.~Pata.
\newblock Stable laws and domains of attraction in free probability theory.
\newblock {\em Ann. of Math. (2)}, 149(3):1023--1060, 1999.
\newblock With an appendix by Philippe Biane.

\bibitem{Bernstein}
S.~N. Bernstein.
\newblock On a property which characterizes a gaussian distribution.
\newblock {\em Proc. Leningrad Polytech. Inst..}, 217(3):21--22, 1941.

\bibitem{BobWes2002}
K.~Bobecka and J.~Weso{\l}owski.
\newblock The {L}ukacs-{O}lkin-{R}ubin theorem without invariance of the
  ``quotient''.
\newblock {\em Studia Math.}, 152(2):147--160, 2002.

\bibitem{BobWes2002Dual}
K.~Bobecka and J.~Weso{\l}owski.
\newblock Three dual regression schemes for the {L}ukacs theorem.
\newblock {\em Metrika}, 56(1):43--54, 2002.

\bibitem{BolHark}
E.~M. Bolger and W.~L. Harkness.
\newblock Characterizations of some distributions by conditional moments.
\newblock {\em Ann. Math. Statist.}, 36:703--705, 1965.

\bibitem{Boutoria2009}
I.~Boutouria.
\newblock Characterization of the {W}ishart distribution on homogeneous cones
  in the {B}obecka and {W}esolowski way.
\newblock {\em Comm. Statist. Theory Methods}, 38(13-15):2552--2566, 2009.

\bibitem{BoutHassMass}
I.~Boutouria, A.~Hassairi, and H.~Massam.
\newblock {E}xtension of the {O}lkin and {R}ubin characterization of the
  {W}ishart dsitribution on homogeneous cones.
\newblock {\em arXiv 1002.1451v1}, pages 1--19, 2010.

\bibitem{BoBr2006}
M.~Bo{\.z}ejko and W.~Bryc.
\newblock On a class of free {L}\'evy laws related to a regression problem.
\newblock {\em J. Funct. Anal.}, 236(1):59--77, 2006.

\bibitem{BozLeinSpeich}
M.~Bo{\.z}ejko, M.~Leinert, and R.~Speicher.
\newblock Convolution and limit theorems for conditionally free random
  variables.
\newblock {\em Pacific J. Math.}, 175(2):357--388, 1996.

\bibitem{BrycProcFreeMeix}
W.~Bryc.
\newblock Markov processes with free-{M}eixner laws.
\newblock {\em Stochastic Process. Appl.}, 120(8):1393--1403, 2010.

\bibitem{FreeHarness}
W.~Bryc, W.~Matysiak, and J.~Weso{\l}owski.
\newblock Free quadratic harness.
\newblock {\em Stochastic Process. Appl.}, 121(3):657--671, 2011.

\bibitem{BrycWes2005}
W.~Bryc and J.~Weso{\l}owski.
\newblock Conditional moments of {$q$}-{M}eixner processes.
\newblock {\em Probab. Theory Related Fields}, 131(3):415--441, 2005.

\bibitem{CaCa}
M.~Capitaine and M.~Casalis.
\newblock Asymptotic freeness by generalized moments for {G}aussian and
  {W}ishart matrices. {A}pplication to beta random matrices.
\newblock {\em Indiana Univ. Math. J.}, 53(2):397--431, 2004.

\bibitem{CasalisLetac96}
M.~Casalis and G.~Letac.
\newblock The {L}ukacs-{O}lkin-{R}ubin characterization of {W}ishart
  distributions on symmetric cones.
\newblock {\em Ann. Statist.}, 24(2):763--786, 1996.

\bibitem{ChouHuang}
Ch.~W. Chou and W.~J. Huang.
\newblock Characterizations of the gamma distribution via conditional moments.
\newblock {\em Sankhya}, 65(2):271--283, 2003.

\bibitem{Demni2008}
N.~Demni.
\newblock Free martingale polynomials for stationary {J}acobi processes.
\newblock In {\em Quantum probability and related topics}, volume~23 of {\em
  QP--PQ: Quantum Probab. White Noise Anal.}, pages 107--119. World Sci. Publ.,
  Hackensack, NJ, 2008.

\bibitem{Ejsmont}
W.~Ejsmont.
\newblock Remarks on conditional moments of the free deformed {P}oisson random
  variables.
\newblock {\em Banach Center Publ.}, 96:135--145, 2012.

\bibitem{HaagerupLarsen}
U.~Haagerup and F.~Larsen.
\newblock Brown's spectral distribution measure for {$R$}-diagonal elements in
  finite von {N}eumann algebras.
\newblock {\em J. Funct. Anal.}, 176(2):331--367, 2000.

\bibitem{YoshidaChar}
O.~Hiwatashi, M.~Nagisa, and H.~Yoshida.
\newblock The characterizations of a semicircle law by the certain freeness in
  a {$C\sp *$}-probability space.
\newblock {\em Probab. Theory Related Fields}, 113(1):115--133, 1999.

\bibitem{KaganLinnikRao}
A.~M. Kagan, Yu.~V. Linnik, and S.~R. Rao.
\newblock {\em Kharakterizatsionnye zadachi matematicheskoi statistiki}.
\newblock Izdat. ``Nauka'', Moscow, 1972.

\bibitem{Kawata}
T.~Kawata and H.~Sakamoto.
\newblock On the characterisation of the normal population by the independence
  of the sample mean and the sample variance.
\newblock {\em J. Math. Soc. Japan}, 1:111--115, 1949.

\bibitem{Kolodziejek}
B.~Ko{\l}odziejek.
\newblock The {L}ukacs-{O}lkin-{R}ubin theorem on symmetric cones through
  {G}leason's theorem.
\newblock {\em http://arxiv.org/pdf/1206.6091.pdf}, 2012.

\bibitem{LahaLukacs}
R.~G. Laha and E.~Lukacs.
\newblock On a problem connected with quadratic regression.
\newblock {\em Biometrika}, 47:335--343, 1960.

\bibitem{LetacMassam}
G.~Letac and H.~Massam.
\newblock Quadratic and inverse regressions for {W}ishart distributions.
\newblock {\em Ann. Statist.}, 26(2):573--595, 1998.

\bibitem{LetacWes2008}
G.~Letac and J.~Weso{\l}owski.
\newblock Laplace transforms which are negative powers of quadratic
  polynomials.
\newblock {\em Trans. Amer. Math. Soc.}, 360(12):6475--6496, 2008.

\bibitem{LetacWes2011}
G.~Letac and J.~Weso{\l}owski.
\newblock Why {J}ordan algebras are natural in statistics: quadratic regression
  implies {W}ishart distributions.
\newblock {\em Bull. Soc. Math. France}, 139(1):129--144, 2011.

\bibitem{Lukacs}
E.~Lukacs.
\newblock A characterization of the gamma distribution.
\newblock {\em Ann. Math. Statist.}, 26:319--324, 1955.

\bibitem{MarchPastur}
V.~A. Marchenko and L.~A. Pastur.
\newblock Distribution of eigenvalues in certain sets of random matrices.
\newblock {\em Mat. Sb. (N.S.)}, 72 (114):507--536, 1967.

\bibitem{NicaChar}
A.~Nica.
\newblock {$R$}-transforms of free joint distributions and non-crossing
  partitions.
\newblock {\em J. Funct. Anal.}, 135(2):271--296, 1996.

\bibitem{NicaSpeicherLect}
A.~Nica and R.~Speicher.
\newblock {\em Lectures on the combinatorics of free probability}, volume 335
  of {\em London Mathematical Society Lecture Note Series}.
\newblock Cambridge University Press, Cambridge, 2006.

\bibitem{OlkinRubin62}
I.~Olkin and H.~Rubin.
\newblock A characterization of the {W}ishart distribution.
\newblock {\em Ann. Math. Statist.}, 33:1272--1280, 1962.

\bibitem{OlkinRubin64}
I.~Olkin and H.~Rubin.
\newblock Multivariate beta distributions and independence properties of the
  {W}ishart distribution.
\newblock {\em Ann. Math. Statist}, 35:261--269, 1964.

\bibitem{PeddadaRichards1991}
S.~D. Peddada and D.~S.~P. Richards.
\newblock Proof of a conjecture of {M}. {L}. {E}aton on the characteristic
  function of the {W}ishart distribution.
\newblock {\em Ann. Probab.}, 19(2):868--874, 1991.

\bibitem{SaitohYosida}
N.~Saitoh and H.~Yoshida.
\newblock The infinite divisibility and orthogonal polynomials with a constant
  recursion formula in free probability theory.
\newblock {\em Probab. Math. Statist.}, 21(1, Acta Univ. Wratislav. No.
  2298):159--170, 2001.

\bibitem{Takesaki}
M.~Takesaki.
\newblock {\em Theory of operator algebras. {I}}, volume 124 of {\em
  Encyclopaedia of Mathematical Sciences}.
\newblock Springer-Verlag, Berlin, 2002.
\newblock Reprint of the first (1979) edition, Operator Algebras and
  Non-commutative Geometry, 5.

\bibitem{VoiculescuAdd}
D.~V. Voiculescu.
\newblock Addition of certain noncommuting random variables.
\newblock {\em J. Funct. Anal.}, 66(3):323--346, 1986.

\bibitem{VoiDykNica}
D.~V. Voiculescu, K.~J. Dykema, and A.~Nica.
\newblock {\em Free random variables}, volume~1 of {\em CRM Monograph Series}.
\newblock American Mathematical Society, Providence, RI, 1992.

\end{thebibliography}

\end{document}